\newcommand{\N}{\mathbb{N}}
\newcommand{\R}{\mathbb{R}}
\newcommand{\C}{\mathbb{C}}
\def\beq{\begin{equation}}
\def\eeq{\end{equation}}
\def\arr{\hbox to 20pt{\rightarrowfill}}
\def\Gl{\mathrm {Gl}}
\def\Sl{\mathrm {Sl}} 
\def\SO{\mathrm {SO}} 
\def\A{\mathcal A} 
\def\cC{\mathcal C} 
\def\H{\mathcal H} 
\def\O{\mathcal O} 
\def\W{\mathcal W} 
\def\cR{\mathcal R}
\def\U{\mathcal U} 
\def\F{\mathbf F}
\newenvironment{res} 
               {\begin{equation} 
\begin{minipage}{0.85\textwidth}} 
               { \end{minipage}\end{equation} } 
\def\ber{\begin{res} } 
\def\eer{\end{res}} 
\numberwithin{equation}{section} 
\newtheorem{thm}{Theorem}[section]
\newcommand{\oline}{\overline}
\newtheorem{lemma}[thm]{Lemma} 
\newtheorem{lem}[thm]{Lemma} 
\newtheorem{prop}[thm]{Proposition}
\newtheorem{rem}[thm]{Remark}
\def\section{\@startsection {section}{1}{\z@}{3.5ex plus 1ex minus 
    .2ex}{2.3ex plus .2ex}{\large\bf}} 
    \def\subsection{\@startsection{subsection}{2}{\z@}{3.25ex plus 1ex minus 
 .2ex}{1.5ex plus .2ex}{\bf}} 
\def\pf{{\em Proof}.\, } 
\def\bysame{\leavevmode\hbox to3em{\hrulefill}\,} 
\def\Ad{\operatorname{Ad}}
\def\af{\mathfrak{a}} 
\def\gf{\mathfrak{g}} 
\def\h{\mathfrak{h}}
\def\Hor{\operatorname{Hor}}
\def\kf{\mathfrak{k}} 
\def\m{\mathfrak{m}} 
\def\nf{\mathfrak{n}}
\def\pf{\mathfrak{p}} 
\def\qf{\mathfrak{q}}
\def\zf{\mathfrak{z}}
\def\A{\mathcal{A}}
\def\Ind{\operatorname{Ind}}
\def\O{\mathcal{O}}
\def\A{\mathcal{A}} 
\def\H{\mathcal{H}} 
\def\F{\mathcal{F}}
\def\S{\mathcal{S}} 
\def\W{\mathcal{W}}
\renewcommand{\Re}{\mbox{\rm Re}\,} 
\begin{document} 
\title[Radon transform]
{Radon transform on real symmetric varieties: kernel and cokernel}

\author{Bernhard Kr\"otz} 
\address{Max-Planck-Institut f\"ur Mathematik\\  
Vivatsgasse 7\\ D-53111 Bonn
\\email: kroetz@mpim-bonn.mpg.de}

\date{\today} 
\thanks{}
\maketitle 

\section{Introduction}
Our concern is with 
$$Y=G/H$$
a semisimple irreducible real symmetric variety (space).\footnote{This 
means $G$ is a connected real semisimple Lie group, $H$ is the fixed point 
group of an involutive automorphism $\sigma$ of $G$ such that there 
is no $\sigma$-stable normal subgroup $H\subset L \subset G$ 
with $\dim H< \dim L < \dim G$.}
\par Our concern is with 
$$L^2(Y)$$
the space of square integrable function on $Y$ with respect to 
a $G$-invariant measure. This Hilbert space has a natural 
splitting 
$$L^2(Y)=L_{\rm mc}^2(Y)\oplus L_{\rm mc}^2(Y)^\bot$$
into most continuous part and its orthocomplement.
\par Another function space is of need, namely: 
$$\A:=L^1(Y)^\omega\, ,$$
the space of analytic vectors for the left regular 
representation of $G$ on $L^1(Y)$. Further we set 
$$\A_{\rm mc}:=\A\cap L_{\rm mc}^2(Y)\quad\hbox{and}\quad 
\A_{\rm mc}^\bot:=\A\cap L_{\rm mc}^2(Y)^\bot\, .$$
We believe that $\A_{\rm mc}$ is dense in 
$L^2(Y)_{\rm mc}$ and 
that  $\A_{\rm mc}^\bot$ is dense in the subspace of $L^2(Y)_{\rm mc}^\bot$ 
which corresponds to principal series which are induced from 
integrable representations of their Levi subgroups -- a proof of 
this and similar facts for $\A$ replaced by  other function 
spaces is desirable.

\par Our concern is with an open domain in parameter space of generic 
real horospheres 
$$\Xi=G/(M\cap H) N$$
where $MAN$ is a minimal $\sigma\theta$-stable\footnote{$\theta$ is 
a Cartan involution commuting with $\sigma$.}
parabolic subgroup of $G$. 

\par Write $C_0^\omega(\Xi)$ for the space of analytic  
functions on $\Xi$ which vanish at infinity. 
In this paper we verify the following facts: 
\begin{itemize}
\item The map 
$$\cR: \A\to C_0^\omega(\Xi), f\mapsto \left(gM_H N\mapsto 
\int_N f(gnH)\ dn \right)$$
is well defined. (We call $\cR$ the (minimal) Radon transform)
\item $\cR|_{\A_{\rm mc}^\bot}=0$. 
\item $\cR|_{\A_{\rm mc}\cap \S(Y)}$ is injective\footnote{$\S(Y)$ is the 
Schwartz space of rapidly decaying functions}. 
\end{itemize}

\par {\it Acknowledgement:} The above results are motivated 
by discussions with Simon Gindikin during my stay at IAS in 
October 2006. I am happy to acknowledge his input and the hospitality 
of IAS.  
\par I would like to thank Henrik Schlichtkrull for 
pointing out a mistake and several inaccuracies 
in an earlier version of the paper. 
Also I thank the anonymous referee for pointing out a mistake 
and his many very useful requests on more detail.

\section{Real symmetric varieties}

\subsection{Notation}

The objective of this section is to 
introduce notation and to recall some facts 
regarding real symmetric varieties. 

\par Let $G_\C$ be a simply connected linear 
algebraic group whose Lie algebra $\gf_\C$ we assume 
to be semi-simple. We fix a real form $G$ of $G_\C$: this means 
that $G$ is the fixed point set of an involutive automorphism 
$\sigma$ of $G_\C$ and that $\gf$, the Lie algebra of $G$, 
yields $\gf_\C$ after complexifying. 

\par Let now $\tau$ be a second involutive 
automorphism of $G_\C$ which we request to commute with $\sigma$. 
In particular, $\tau$ stabilizes $G$. We write 

$$H_\C:= G_\C^\tau\quad\hbox{and} \quad H:= G^\tau$$
for the corresponding fixed point groups of $\tau$ in 
$G$, resp. $G_\C$. We note that $H_\C$ is always connected, 
but $H$ usually is not; the basic example of $(G_\C,G)=(\Sl(2,\C), 
\Sl(2,\R))$ and $(H_\C,H)=(\SO(1,1;\C), \SO(1,1;\R))$ already illustrates 
the situation. 
\par With $G$ and $H$ we form the object of our concern 

$$Y=G/H\, ;$$ 
we refer to $Y$ as a {\it real (semi-simple) symmetric variety (or space)}. 
Henceforth we will denote by $y_o=H$ the standard 
base point in $Y$. 
We write $Y_\C=G_\C/H_\C$ for the affine complexification of $Y$ and 
view, whenever convenient, $Y$ as a subspace of $Y_\C$ via
the embedding 

$$Y\hookrightarrow Y_\C, \ \ gH\mapsto gH_\C\, .$$

\par At this point it  is useful to introduce infinitesimal notation. 
Lie groups will always be denoted by upper case Latin letters, 
e.g. $G$, $H$, $K$ etc., and the corresponding Lie algebras 
by lower case German letters, eg. $\gf$, $\h$, $\kf$ etc. 
It is convenient to use the same symbol $\tau$ for the 
derived automorphism $d\tau({\bf 1})$ of $\gf$. 
Let us denote by $\qf$ the $-1$-eigenspace of $\tau$ on 
$\gf$. Note that $\qf$ is an $H$-module which naturally 
identifies with the tangent space $T_{y_o} Y$ at the 
base point.

\par From now we will request that $Y$ is irreducible, i.e. 
we assume that the only $\tau$-invariant ideals in $\gf$ are 
$\{ 0\}$ and $\gf$. In practice this means that $G$ is simple 
except for the group case $G/H=H\times H/ H \simeq H$. 

\par We recall that maximal compact subgroups $K<G$ are in one-to-one 
correspondences with Cartan involutions $\theta: G\to G$. The correspondence 
is given by $K=G^\theta$. We form the Riemann symmetric space 
$$X=G/K$$
of the non-compact type and denote by $x_o=K$ the standard base point. 
As before we write $\theta$ for the derived involution 
on $\gf$. We let $\pf\subset \gf$ be the $-1$-eigenspace 
$\theta$ and note that the $K$-module $\pf$ identifies 
with $T_{x_o} X$.

\par According to Berger,  we may (and will) assume  
that $K$ is $\tau$-invariant.  This implies that 
both $\h$ and $\qf$ are $\tau$-stable. Let us 
fix a maximal abelian subspace 

$$\af \subset \qf\cap \pf\, .$$
We wish to point out that  $\af$ is unique modulo conjugation 
by $H\cap K$, see \cite{S}, Lemma 7.1.5. Set $A=\exp (\af)$.

\par Our next concern is the centralizer $Z_G(A)$ of $A$. 
We first remark that $Z_G(A)$ is reductive and admits a natural splitting 
$$ Z_G(A)= A\times M\, , $$
(cf. \cite{K}, Prop. 7.82 (a)). The Lie algebra of $M$ is given by 

$$\m=\zf_\gf(\af) \cap \af^\perp$$
where $\af^\perp$ is the orthogonal complement of 
$\af$ in $\gf$ with respect to the Cartan-Killing form $\kappa$ of $\gf$. 
If $M_0$ denotes the connected component of $M$, then 
$$M=M_0 F$$
where $F\subset M\cap K$ is a finite 2-group 
(cf.\ \cite{K}, Prop. 7.82 (d) and Th. 7.52). 

\begin{rem} If $\af$ is maximal abelian in $\pf$, then $F\subset H$ 
as follows from the explicit description of $F$ in \cite{K}, Th. 7.52. 
In general however, the $\tau$-stable group $F$ is not contained in $H$ and does 
not  even admit a factorization $F=F^\tau F^{-\tau}$ in 
$\tau$-fixed and $\tau$-anti-fixed points.
\end{rem}

We write $\m_{ns}$ for the non-compact semisimple part 
of $\m$ and note that 

\begin{equation}\label{minh}
\m_{ns}\subset \h\end{equation} 
(cf. \cite{S}, Lemma 7.1.4).  
Set $M_H=M\cap H = Z_H(A)$ and let $\m = \m_h +\m_q$ be the splitting 
of $\m$ into $+1$ and $-1$-eigenspace. Note 
that $\m_h$ is the Lie algebra of $M_H$. 
Then (\ref{minh}) implies that $\m_q\subset \kf$ 
and consequently $M_q=\exp(\m_q)$ 
is compact. Moreover: 

$$M/F= M_H M_q/F  \quad \hbox{with}\quad   
M_H\cap M_q \quad  \hbox{discrete}\, .$$

\par We turn our attention to the root space decomposition 
of $\gf$ with respect to $\af$. 
For $\alpha\in \af^*$ , let
$$\gf^\alpha=\{ X\in \gf\mid (\forall Y\in \af)\ [Y,X]=\alpha(Y) X\} $$
and set  
$$\Sigma=\{ \alpha\in \af^*\setminus\{ 0\}\mid \gf^\alpha\neq \{0\}\}\,  .$$  
It is a fact that $\Sigma$ is a (possibly reduced) 
root system, cf. \cite{S}, Prop. 7.2.1. 
Hence we may fix a positive system $\Sigma^+\subset \Sigma$ and define 
a corresponding nilpotent subalgebra 

$$\nf:=\bigoplus_{\alpha\in \Sigma^+} \gf^\alpha\, .$$
Set $N:=\exp(\nf)$. 
Note that $\tau(\nf)=\theta(\nf)$. We record the decomposition 

$$\gf=\af \oplus \m \oplus \nf \oplus \tau(\nf)\, .$$

\par We shift our focus to the real flag manifold of $G$ associated 
to $A$ and $\Sigma^+$. We define 
$$P_{\rm min} := M A N$$ 
and note that $P_{\rm min}$ is a minimal $\theta\tau$-stable parabolic 
subgroup of $G$. 

\par The open $H$-orbit decomposition on the flag manifold $G/P_{\rm min}$ 
is essential  in the theory of $H$-spherical 
representations of $G$. In order to describe this decomposition 
we have to collect some facts on Weyl groups first. 

\par Let us denote by $\W$ the Weyl group of the root system $\Sigma$. The 
Weyl group admits an analytic realization: 

$$\W= N_K(\af)/ Z_K(\af)\, .$$
The group $\W$ features a natural subgroup 

$$\W_H:= N_{H\cap K}(\af)/ Z_{H\cap K}(\af)\, . $$
Knowing $\W$ and $\W_H$, we can quote the
decomposition of $G$ into open $H\times P_{\rm min}$-cosets (cf. \cite{Ma}): 

\begin{equation}\label{oo}
G\doteq  \amalg_{w\in \W_H \setminus \W} H w P_{\rm min}\, , 
\end{equation}
where $\doteq$ means equality up to a finite union of strictly lower 
dimensional 
$H\times P_{\rm min}$-orbits. 

\subsection{Horospheres}

This paragraph is devoted to horospheres on the 
symmetric variety $Y$. By a {\it (generic) horosphere} on $Y$ we 
understand an orbit of a conjugate of $N$ of maximal 
dimension (i.e. $\dim N$). The entity of all 
horospheres will be denoted by $\Hor (Y)$. We 
remark that $G$ acts naturally on $\Hor (Y)$ from the left. 

\par Our goal is to show that $\Hor(Y)$ is a connected 
analytic manifold. For that we define 

$$G_h:=\{ x\in G\mid Nx\cdot y_o\in \Hor(Y)\}$$
and note the following immediate things: 
\begin{itemize}
\item $G_h$ is open, right $H$-invariant and left $P_{\rm min}$-invariant. 
\item $G_h$ contains the open $P_{\rm min}\times H$-cosets
$P_{\rm min}w H$  where $w\in \W/\W_H$, see (\ref{oo}). In particular, 
$G_h$ is dense.   
\item $\Hor(Y)=\{ gNx\cdot y_o\mid g\in G, x\in G_h\} .$
\item (Infinitesimal characterization) 
$G_h=\{ x\in G\mid \Ad(x^{-1}) \nf \cap\h =\{0\}\}\, .$
\end{itemize}

\begin{rem} The set $G_h$ is in general bigger then the open dense 
disjoint union $\bigcup_{w\in \W/ \W_H } P_{\rm min} w H$. It is 
in particular connected as we show below. To see an example, 
consider $G=\Sl(2,\R)$, $H=\SO(1,1;\R)$ and $P_{\rm min}$ the 
upper triangular matrices with determinant one. Then 
$\h$ and $\nf$, both one-dimensional, can never be conjugate. Thus, 
by the infinitesimal characterization  from above, one has $G_h=G$
in this case.
\end{rem}

\par Next we provide charts for $\Hor(Y)$. 
For that we introduce the 
$G$-manifold 
$$\Xi=G/ M_HN\, $$
and define the $G$-equivariant map 

$$E : \Xi\to \Hor (Y), \ \ \xi=gM_H N \mapsto 
E(\xi)=gN\cdot y_o\, .$$
As in \cite{GKO2}, Prop. 2.1, one verifies that $E$ is 
an injection. 
Now we move our chart by elements $x\in G_h$. Set 
$L:= MA$, $L^x= x^{-1}Lx$, $L_H^x:= L^x\cap H$, 
$N^x:=x^{-1} N x$  
and 
$\Xi_x:= G/ L_H^x N^x$. Then the 
map 

$$E_x:G/L_H^x N^x \to \Hor (Y), \ \ gL^x N^x \mapsto g N^x\cdot y_o\, .$$ 
is $G$-equivariant and injective. It is 
immediate that $(E_x, \Xi_x)_{x\in G_h}$ forms an analytic atlas 
for $\Hor(Y)$. 

\begin{lem} $\Hor(Y)$ is connected. 
\end{lem}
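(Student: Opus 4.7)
The plan is to realise $\Hor(Y)$ as the continuous image of a connected space. By the third bullet above, the map
\[
\Phi: G\times G_h\longrightarrow \Hor(Y),\qquad (g,x)\longmapsto gNx\cdot y_o,
\]
is a continuous surjection. Since $G$ is connected, the problem reduces to showing that $G_h$ is connected.

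To analyse $G_h$ I would use the infinitesimal description $G_h=\{x\in G:\Ad(x^{-1})\nf\cap\h=\{0\}\}$. The condition depends only on $\Ad(x)\h$, and hence (since $\Ad(H^0)$ fixes $\h$) only on the coset $xH^0$, where $H^0$ is the identity component of $H$. Writing $\tilde\pi:G\to G/H^0$ for the projection, we obtain $G_h=\tilde\pi^{-1}(Z)$ with
\[
Z:=\{z\in G/H^0:\dim N\cdot z=\dim N\}
\]
open in the connected manifold $G/H^0$. Since $\tilde\pi$ is open with connected fibres (the cosets of $H^0$), connectedness of $G_h$ will follow from connectedness of $Z$.

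For $Z$ itself, observe that it is $P_{\min}$-invariant (because $N$ is normal in $P_{\min}$, so $N$-orbit dimensions are constant along $P_{\min}$-orbits) and contains each of the finitely many open $P_{\min}$-orbits $P_{\min}\cdot w y_o$ with $w\in\W/\W_H$. My plan is to bridge any two such open orbits by a continuous path in $Z$: given representatives $w_1,w_2\in K$, deform $w_1$ to $w_2$ along a path in $K$ and verify, using openness of the transversality condition $\Ad(x^{-1})\nf\cap\h=\{0\}$, that after a small perturbation the resulting path in $G/H^0$ lies entirely in $Z$.

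The main obstacle is this last step. The bad set $(G/H^0)\setminus Z$ is closed and real-analytic but a priori of codimension one; were it of codimension at least two, connectedness of $Z$ would be automatic. In general one must use the particular structure of real symmetric varieties --- Matsuki's decomposition \eqref{oo}, the relation $\tau(\nf)=\theta(\nf)$, and the $H$-orbit geometry on the flag manifold $G/P_{\min}$ --- to rule out codimension-one strata of the bad set separating the open $P_{\min}$-orbits. This is where the real-analytic structure and the specific hypotheses on $(G,H,\sigma,\theta)$ enter most directly.
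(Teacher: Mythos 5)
Your reduction to the connectedness of $G_h$, and the further passage to the open set $Z\subset G/H^0$, match the strategy of the paper, and you have correctly located the crux: one must rule out codimension-one walls in the complement of $G_h$ separating the open $P_{\rm min}\times H$-orbits. But you leave precisely that step open. Your proposed remedy --- deforming a path in $K$ between representatives $w_1,w_2$ and perturbing slightly --- cannot close it: a small perturbation of a path does not let you cross a codimension-one real-analytic hypersurface, as you yourself concede. So as written the argument has a genuine gap at its only nontrivial point.

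The paper closes this gap by an orbit-by-orbit codimension analysis rather than by path-bridging. The complement $G\setminus G_h$ is a union of finitely many non-open $H\times P_{\rm min}$-orbits, and Matsuki's explicit description of these orbits (\cite{Ma}, Th.~3(i)) shows that whenever $HcP_{\rm min}$ is not open one has $\Ad(c)\af\cap\h\neq\{0\}$. If moreover $HcP_{\rm min}$ has codimension one, then the single excess dimension of $\Ad(c)(\m+\af+\nf)\cap\h$ is already accounted for by $\Ad(c)\af\cap\h$, i.e.\ $\Ad(c)(\m+\af+\nf)\cap\h=\Ad(c)\af\cap\h$, which forces $\Ad(c)\nf\cap\h=\{0\}$; by the infinitesimal characterization such $c$ lies in $G_h$ after all. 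Hence every codimension-one orbit is contained in $G_h$, the complement of $G_h$ is a finite union of orbits of codimension at least two, and $G_h$ is connected --- exactly the ``codimension $\geq 2$'' alternative you mention but do not establish. To complete your write-up, replace the path-deformation paragraph by this use of Matsuki's classification.
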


\begin{proof} It is sufficient to show that $G_h$ is connected. 
We know that $G_h$ is an open and dense 
$P_{\rm min}\times H$-invariant subset of $G$. Now there are only 
finitely many orbits of $P_{\rm min}\times H$ on $G$ and those
are described explicitly, see \cite{Ma}. 
\par As $G_h$ contains all 
open orbits, it is sufficient to show that $G_h$
contains all codimension one orbits. This in turn follows from the
explicit description of all orbits in \cite{Ma}, Th. 3 (i): if 
$HcP_{\rm min}\subset G$ is not open, then \cite{Ma} implies 
that $\Ad(c) \af\cap \h\not =\{0\}$. In 
particular if $HcP_{\rm min}$ is of codimension one, then 
$\Ad(c) (\m +\af +\nf)\cap \h= \Ad(c) \af\cap \h$ and therefore 
$\Ad(c)\nf \cap \h =\{0\}$.  Our infinitesimal characterization 
completes the proof. 
\end{proof}

Finally we discuss polar coordinates on $\Xi$.

\begin{equation}\label{par}  K/(M_H\cap K) \times A \to \Xi, \ \ (k(M_H\cap K), a)\mapsto kaM_H N
\end{equation}
is a diffeomorphism. Often we view $A$ as subspace of $\Xi$ via 
$$A\hookrightarrow \Xi, \ \ a\mapsto a M_H N\, .$$

\section{Function spaces and the definition of the Radon transform}

We consider the left regular representation $L$ of 
$G$ on $L^1(Y)$, i.e. for $g\in G$ and $f\in L^1(Y)$ 
we look at 
$$[L(g)f](y)= f(g^{-1}y) \qquad (y\in Y)\, .$$ 
Then we focus on the subspace 

$$\A:=L^1(Y)^\omega$$
of analytic vectors for $L$. We note that 
$f\in\A$ means that $f\in C^\omega(Y)$ such that 
there exists an open neighborhood $U$ of $\bf 1$ in $G_\C$ such that 
$f$ extends holomorphically to 
$$UG\cdot y_o \subset Y_\C=G_\C/H_\C$$
(here we view $Y$ embedded in $Y_\C$ via $gH\mapsto gH_\C$)
such that for all compact $C\subset U$: 

\begin{equation} \label{e=c} \sup_{c\in C} \|f(c\cdot)\|_{L^1(Y)}<\infty\,, 
\end{equation}
(cf. \cite{gko}, Prop.\ A.2.1). 
For an open neighborhood $U$ of $\bf 1$ in $G_\C$ we denote 
by $\A_U$ the space of holomorphic functions on 
$UG\cdot y_o$ which satisfy (\ref{e=c}) for all compact $C\subset U$. 
Note that $\A_U$ can be seen as a closed subspace of 
$\O(U, L^1(Y))$ and hence is a Fr\'echet space. 
Moreover 
$$\A=\bigcup_U \A_U$$
with continuous inclusions 
$$\A_U\to \A_V, \ \ f\mapsto f|_{VG\cdot y_o}$$
for $V\subset U$. In this way we can endow $\A$ with a structure of 
of a locally convex space. 

\par We observe that $G$, via $L$,  acts on $\A$ in an analytic 
manner. 

\subsection{Definition of the Radon transform}\label{sd}

We begin with the crucial technical fact. 

\begin{lemma} \label{l=1}Let $f\in\A$. Then the following assertions hold: 
\begin{enumerate}
\item $\sup_{a\in A\atop k\in K} \int_N |f(kan\cdot y_o)| \ dn <\infty$. 
\item $\sup_{a\in A\atop k\in K} 
a^{-2\rho}\int_N |f(kna\cdot y_o)| \ dn <\infty$. 
\end{enumerate}
\end{lemma}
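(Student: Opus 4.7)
My plan is first to observe that statements (i) and (ii) are literally identical: the substitution $n\mapsto ana^{-1}$ on $N$ has Jacobian $\det\Ad(a)|_{\nf}=a^{2\rho}$, and the identity $kan = k(ana^{-1})a$ immediately gives
\[
\int_N |f(kany_o)|\,dn \;=\; a^{-2\rho}\int_N |f(knay_o)|\,dn,
\]
so it suffices to prove (i).

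For (i), my plan is to use the holomorphic extension of $f$ to dominate pointwise values of $|f|$ by an integral over a complex neighborhood, and then recognise the resulting double integral as an $L^1$-norm controlled by the analytic-vector hypothesis. Let $U\ni\mathbf 1$ and a compact $C\subset U$ be as in the definition of $\A$, with $M:=\sup_{c\in C}\|f(c\cdot)\|_{L^1(Y)}<\infty$. Since $c\mapsto f(cy)$ is holomorphic on $U$ for every $y\in Y$, the function $c\mapsto |f(cy)|$ is plurisubharmonic, so a sub-mean-value inequality yields a constant $C_1>0$, depending only on $C$, with
\[
|f(y)| \;\le\; C_1 \int_C |f(cy)|\,dc,\qquad y\in Y.
\]
Substituting $y=kany_o$ and applying Fubini:
\[
\int_N |f(kany_o)|\,dn \;\le\; C_1 \int_C \int_N |f(c\cdot kany_o)|\,dn\,dc.
\]

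The core step is then to bound the right-hand side uniformly in $(k,a)\in K\times A$. The idea is to choose $C$ of product form $C = C_{\bar N}\cdot C_A\cdot C_M$, adapted to the Iwasawa-type factorisation $G_\C = \bar N_\C A_\C M_\C N_\C$, with $C_{\bar N}\subset\bar N_\C$, $C_A\subset A_\C$, $C_M\subset M_\C$ small complex neighborhoods of the identity. For each fixed $(k,a)$, the map $(c,n)\mapsto c\cdot kan\cdot y_o$ then sends $C\times N$ into the tubular neighborhood $UG\cdot y_o\subset Y_\C$, and a coarea-type change of variables identifies the double integral with $\int_{UG\cdot y_o}|f|\,d\mu_{Y_\C}\cdot\rho(k,a)$ for some Jacobian density $\rho(k,a)$. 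The ambient integral is dominated by $M\cdot|U|$ by integrating the estimate (\ref{e=c}) against the Haar measure on $U$, so the proof will be complete if $\sup_{k,a}\rho(k,a)<\infty$.

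I expect the main obstacle to be precisely the uniformity of $\rho(k,a)$. $K$-uniformity is straightforward from compactness of $K$: it is handled by enlarging $C$ slightly so as to absorb $K$-conjugation. $A$-uniformity is more delicate because $\Ad(a)$ rescales $\bar\nf$ by $a^{-2\rho}$ and $\nf$ by $a^{2\rho}$ as $a$ leaves every compact set in the chamber. The crucial cancellation is that these opposite rescalings combine to an $a$-independent contribution to $\rho$ --- a reflection of the Iwasawa measure decomposition on $G_\C$ --- which is the structural reason the two statements (i) and (ii) coincide and why the resulting bound is genuinely $a$-uniform.
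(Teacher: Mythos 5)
Your reduction of (ii) to (i) via the substitution $n\mapsto a^{-1}na$, and your opening move --- a sub-mean-value (Bergman) estimate obtained from the holomorphic extension of $f$ --- are both correct and coincide with the paper's proof. The gap is in the core step. A first problem: you take $C=C_{\overline N}\cdot C_A\cdot C_M$ with no $N_\C$-factor, so $C$ is not a neighborhood of $\bf 1$ in $G_\C$ and the inequality $|f(y)|\le C_1\int_C|f(cy)|\,dc$ is not valid (a plurisubharmonic function is not dominated by its average over a thin set). The more serious problem is the claimed identity $\int_C\int_N|f(c\,kan\cdot y_o)|\,dn\,dc=\rho(k,a)\int_{UG\cdot y_o}|f|\,d\mu_{Y_\C}$ with a scalar $\rho(k,a)$. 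The pushforward of $dc\,dn$ under $(c,n)\mapsto c\,kan\cdot y_o$ is a density on $Y_\C$ whose value at a point is a fiber volume, and the fibers are governed by the set of $n'\in N$ with $(ka)n'(ka)^{-1}$ lying in $C$ times the stabilizer of the image point, i.e.\ by $\Ad(a)$ applied to $\nf$; since the lemma requires the supremum over \emph{all} of $A$, not a chamber, this set (hence the density) is unbounded in one direction of $A$, with volume of order $a^{-2\rho}$. The asserted cancellation between the $a^{2\rho}$-rescaling of $\nf$ and the $a^{-2\rho}$-rescaling of $\overline{\nf}$ is never substantiated, and no identity of that kind is available here: the difficulty is not a product of two Jacobians but the degeneration of the conjugated neighborhood $(ka)^{-1}C(ka)$. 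Finally, even granting the identification, $\int_{UG\cdot y_o}|f|\,d\mu_{Y_\C}$ is not directly what (\ref{e=c}) controls; that hypothesis bounds $\int_Y|f(cy)|\,dy$ slice by slice, and passing to the $2\dim Y$-dimensional measure on the open set $UG\cdot y_o$ requires another fibered change of variables you have not justified.

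The paper avoids all of this by a different placement of the perturbation. One first reduces to $M\subset H$ (the complementary piece is compact by (\ref{minh})), so that $AN\cdot y_o$ is open in $Y$ with $dy|_{AN}=da\,dn$. One then takes the perturbing neighborhood inside $N_\C A_\C$ only --- no $\overline N_\C$ --- of the form $U_NU_A$ with $U_N=\exp(iB_\nf)\exp(B_\nf)$ and $U_A=\exp(B_\af+iB_\af)$, and inserts it \emph{between} $k$ and $an$: $|f(kan\cdot y_o)|\le C\int_{U_NU_A}|f(kn'a'an\cdot y_o)|\,da'\,dn'$. The real parts of $U_A$ and $U_N$ are absorbed into the $A\times N$-integral, which is then dominated by $\int_Y|f(kn'\exp(iX)y)|\,dy\le\sup_{c\in KU_NU_A}\|f(c\cdot)\|_{L^1(Y)}$ via the open embedding of $AN\cdot y_o$ into $Y$; only the compact imaginary parameters $n',X$ remain, contributing $\mathrm{vol}(U_N)\mathrm{vol}(B_\af)$. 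Because the perturbation sits to the left of $a$ it is never conjugated by $a$, and the uniformity in $(k,a)$ is automatic. If you wish to repair your argument, this is the mechanism you need: put the complex neighborhood on the same side as $N$, and use the openness of $AN\cdot y_o$ in $Y$ to convert the $N$-integral together with the real $A$-directions into an $L^1(Y)$-norm of a translate of $f$.
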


\begin{proof} (i) Let $f\in\A$. Let 
$B_\af\subset \af$ and $B_\nf\subset\nf$ be balls around zero 
and set 
$$U_A:= \exp(B_\af + iB_\af)\subset A_\C\,  ,$$ 
$$U_N:= \exp(iB_\nf)\exp(B_\nf)\subset N_\C\, .$$
If we choose $B_\af$ and $B_\nf$ small enough, then 
$f$ will extend to a holomorphic function in a 
neighborhood of 
$$KU_N U_A G\cdot y_o$$ 
such that 
$$\sup_{c\in KU_N U_A} \|f(c\cdot )\|_{L^1(Y)}<\infty\, .$$
To reduce notation let us assume that $M\subset H$ -- this is no loss
as the complementary piece $M_qF$ in $M$ to $M_H$ is compact, 
(\ref{minh}).  
Then 
$$NA\to Y , \ \ na\mapsto na\cdot y_o$$ 
is an open embedding. In particular $NA\cdot y_o\subset Y$ is open. 
It follows that 

$$U_A U_N AN \cdot y_o \subset Y_\C$$
is open and we may assume that 
$$U_AU_N AN\subset A_\C N_\C$$ injects 
into $Y_\C$.  Fix $k\in K$ and  define a holomorphic function $F$ on 
$U_N U_A AN$ by $F(z):=f(kz \cdot y_o)$. 
In particular, we obtain a constant $C>0$ such that for 
all $an\in AN$ we get that 

$$|F(an)|\leq C \int_{U_N U_A } |F(n'a' an)| da' dn'$$
with $da'$ and $dn'$ Haar-measures on $A_\C$ and $N_\C$ 
(Bergman-estimate). 
Let us write $dy$ for a Haar measure on $Y$ and observe 
that $dy$ restricted to $AN$ is just $da \ dn$ with $da$ and $dn$ 
Haar measures on $A$ and $N$ respectively.   
Therefore 
\begin{align*} \int_N |F(an)| dn &\leq C  \int_{U_N U_A } 
\int_N |F(n'a' an)|\  da'\  dn' \ dn \\ 
&\leq C  \int_{U_N} 
\int_{B_\af} \int_A \int_N |F(n'\exp(iX) a' an)| \ dn'\ dX  \ da'\  dn \\ 
&\leq C  \int_{U_N} 
\int_{B_\af} \int_Y  |f(kn'\exp(iX) y )| \ dn'\ dX  \ dy \\ 
& \leq C\cdot  {\rm{vol}} (U_N)\cdot {\rm{vol}} (B_\af) \sup_{c\in KU_A U_N}
\|f(c\cdot)\|_{L^1(Y)}< \infty\, .
\end{align*}
We observe that the last expression does not depend on $a\in A$ and $k\in K$. 
This proves (i). 
Now (ii) is just a variable change of (i): 

\begin{align*} \int_N |f(kna\cdot y_o)| \ dn & 
= \int_N   |f(k a a^{-1} na\cdot y_o)| \ dn \\ 
&= a^{2\rho} \int_N   |f(k a n\cdot y_o)| \ dn 
\end{align*} 

\end{proof}

For $f\in \A$ we define a function $\cR(f)$ on $\Xi$ via 

$$\cR(f)(gM_H N):=\int_N f(gn \cdot y_o) \ dn \, .$$
According to our previous lemma the defining integrals 
are absolutely convergent. 

Let us write $C_0^\omega(\Xi)$ for the 
space of analytic functions on $\Xi$ which vanish at 
infinity. In view of (\ref{par}) vanishing at infinity for $F$ means 

$$\lim_{a\to \infty\atop a\in A} \sup_{k\in K} |F(kaM_H N)|=0\, .$$ 

\begin{prop}\label{p=d} The following assertions hold: 
\begin{enumerate}
\item For all $f\in \A$ one has $\cR(f)\in C_0^\omega(\Xi)$. 
\item The map $\cR: \A\to L^1(\Xi)^\omega\subset C_0^\omega (\Xi)$ is continuous. 
\end{enumerate}
\end{prop}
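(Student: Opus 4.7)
The plan is to establish analyticity of $\mathcal{R}(f)$ first, then the $L^1$-bound (which gives continuity (ii) and, via the analytic-vector structure, vanishing at infinity in (i)), relying throughout on Lemma \ref{l=1}.

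For analyticity, since $f\in \mathcal{A}$ extends holomorphically to some $UG\cdot y_o\subset Y_\C$, the Bergman-type estimate from the proof of Lemma \ref{l=1} still applies when $g$ is allowed to vary in a complex neighborhood of $G$ in $G_\C$ (one simply replaces $f$ by its complex translates). This shows that the defining integral converges absolutely and uniformly on compact subsets of such a complex neighborhood, and Morera's theorem in each complex direction makes it holomorphic in $g$. Right-invariance under $M_HN$ (using $M_H\subset H$ and that $M$ normalizes $\nf$ with unit Jacobian) descends the holomorphic extension to a neighborhood of $\Xi$ in $\Xi_\C$, proving $\mathcal{R}(f)\in C^\omega(\Xi)$.

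For the $L^1$-bound, continuity, and vanishing at infinity: in the polar coordinates (\ref{par}), the natural $G$-invariant measure on $\Xi$ is proportional to $a^{2\rho}\,dk\,da$, and the change of variable in Lemma \ref{l=1} gives
\[
a^{2\rho}\mathcal{R}(f)(kaM_HN)=\int_N f(kna\cdot y_o)\,dn.
\]
Hence
\[
\|\mathcal{R}(f)\|_{L^1(\Xi)}\;\le\;C\int_K\int_A\int_N |f(kna\cdot y_o)|\,dn\,da\,dk\;\le\;C'\|f\|_{L^1(Y)},
\]
the last inequality by a Jacobian computation for the (open dense) parametrization $K\times A\times N\to Y$, $(k,a,n)\mapsto kna\cdot y_o$, after the reduction $M\subset H$ of Lemma \ref{l=1}. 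Since $\mathcal{R}$ is $G$-equivariant, it preserves the analytic-vector structure, so $\mathcal{R}:\mathcal{A}\to L^1(\Xi)^\omega$ is continuous, giving (ii). Vanishing at infinity then follows from the general inclusion $L^1(\Xi)^\omega\subset C_0^\omega(\Xi)$: an analytic vector has uniformly bounded $\gf$-derivatives, so by a standard compactness argument (a fixed-size $G$-neighborhood of any point where $|\mathcal{R}(f)|\ge\varepsilon$ contributes a fixed positive amount to $\|\mathcal{R}(f)\|_{L^1(\Xi)}$, contradicting $L^1$-summability along a sequence going to infinity) one gets $\sup_k|\mathcal{R}(f)(kaM_HN)|\to 0$ as $a\to\infty$.

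The main obstacle is the Jacobian control $\int_K\int_A\int_N |f(kna\cdot y_o)|\,dn\,da\,dk\le C\|f\|_{L^1(Y)}$: in the general symmetric (non-group) case the map $K\times A\times N\to Y$ is not a global diffeomorphism and one must exploit the open $H\times P_{\rm min}$-orbit decomposition (\ref{oo}), the finite-to-one reduction $M\subset H$, and compactness of $M/M_H$ to relate the triple integral to $\|f\|_{L^1(Y)}$. The remaining steps are careful but routine refinements of the proof of Lemma \ref{l=1} combined with the analytic-vector formalism.
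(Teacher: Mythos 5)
Your overall architecture (analyticity via holomorphic extension of the integrand, then an $L^1$-estimate upgraded to vanishing at infinity through the analytic-vector/Sobolev mechanism) is close in spirit to the paper's proof, which establishes $\cR(f)|_A\in L^1(A)$, notes it is an analytic vector for the regular representation of $A$ on $L^1(A)$, and invokes the Sobolev lemma to get $C_0^\omega(A)$, with the compact parameter $k\in K$ causing no difficulty; continuity then follows from the final estimate in the proof of Lemma \ref{l=1}(i), which is a continuous seminorm on $\A_U$. However, the quantitative step you yourself identify as ``the main obstacle'' is not merely delicate --- as stated it is false. The inequality $\int_K\int_A\int_N |f(kna\cdot y_o)|\,dn\,da\,dk\le C'\|f\|_{L^1(Y)}$ cannot hold: the measure identity available (and used in Lemma \ref{l=1}) is $dy|_{AN\cdot y_o}=da\,dn$ in the coordinates $(a,n)\mapsto an\cdot y_o$, and passing to the reversed order $(n,a)\mapsto na\cdot y_o$ introduces the Jacobian $a^{2\rho}$ --- exactly the factor that separates parts (i) and (ii) of Lemma \ref{l=1}. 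Concretely,
$$\int_A\int_N |f(kna\cdot y_o)|\,dn\,da=\int_{NA\cdot y_o}|f(ky)|\,a(y)^{2\rho}\,dy\,, $$
with $a(y)$ the $A$-coordinate of $y$, and the weight $a(y)^{2\rho}$ is unbounded in the dominant direction. A nonnegative $f$ of unit $L^1$-norm concentrated near $n_0a_0\cdot y_o$ with $a_0^{2\rho}$ large defeats any constant $C'$. None of the remedies you list (the orbit decomposition (\ref{oo}), the reduction $M\subset H$, compactness of $M/M_H$) addresses this, since the failure already occurs on the single open piece $NA\cdot y_o$.

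The repair is to keep the order $kan$ throughout: from $dy|_{AN\cdot y_o}=da\,dn$ one gets
$$\int_A |\cR(f)(kaM_HN)|\,da\le\int_A\int_N|f(kan\cdot y_o)|\,dn\,da\le\|f\|_{L^1(Y)}\,,$$
uniformly in $k$, which is integrability of $\cR(f)$ against $dk\,da$ on $K/(M_H\cap K)\times A\simeq\Xi$ --- \emph{not} against the invariant measure $a^{2\rho}\,dk\,da$ that you chose to work with. This is precisely the distinction your argument elides, and it is the reason the paper phrases the key step as $\cR(f)|_A\in L^1(A)^\omega$ rather than as a bound on $\|\cR(f)\|_{L^1(\Xi)}$ with respect to the invariant measure. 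Once the estimate is taken in this form, your concluding step (an analytic vector in $L^1$ of an abelian group has bounded derivatives, hence lies in $C_0$) is fine and coincides with the paper's appeal to the standard Sobolev lemma, and continuity of $\cR$ follows because the bound $\sup_{a,k}\int_N|f(kan\cdot y_o)|\,dn\le C\sup_{c\in KU_AU_N}\|f(c\cdot)\|_{L^1(Y)}$ from Lemma \ref{l=1} is a continuous seminorm on $\A_U$.
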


\begin{proof} (i) As before it is no loss to assume that $M\subset H$ -- the piece of $M$ not in 
$H$ is compact by (\ref{minh}). 
Let us first show that $F:=\cR(f)|_A\in C_0^\omega$. In fact 
as $f$ is in $L^1(Y)$ and $dy|_{AN} = da\ dn$,  it follows that $F\in L^1(A)$. 
Moreover $F\in L^1(A)^\omega$, i.e. it is an analytic vector for 
the regular representation of $A$ on $L^1(A)$. Therefore the 
standard Sobolev lemma implies that $F\in C_0^\omega(A)$. 
\par Finally, employing the additional compact parameter $k\in K$ 
causes no difficulty.    
\par (ii) This follows from (i) and the last (and crucial) estimate in the proof of 
Lemma \ref{l=1}(i). 
\end{proof}

\begin{rem}\label{ran} Actually one can define the Radon transform 
with image on the whole horosphere space $\Hor(Y)$. Recall 
the set $G_h\subset G$ and for $x\in G_h$ the parameter space 
$\Xi_x=G/L_H^x N^x$. For $f\in \A$ one then defines 
$$\cR_x(f)(gL_H^x N^x)=\int_N f(g x^{-1} n x\cdot y_o)\ dn\, . $$
The resulting function $\cR_x(f)$ is then, as above seen 
to lie in $C_0^\omega(\Xi_x)\cap L^1(\Xi_x)$. Patching matters together 
we thus obtain a well defined $G$-map 
$$\cR: \A\to C^\omega(\Hor(Y))\, .$$
\end{rem}

\section{The kernel of the Radon transform: discrete spectrum}

In this section we show that the discrete spectrum  of $L^2(Y)$, as far as it meets 
$\A$, lies in the 
kernel of $\cR$. In fact we show even more: namely that the trace of $\A$ in the orthocomplement 
of the most continuous spectrum lies in the kernel. 

\par Recall our minimal $\theta\tau$-stable parabolic subgroup 

$$P_{\rm min}= M AN\, .$$
In the sequel we use the symbol $Q$ for a $\theta\tau$-stable 
parabolic which contains $P_{\rm min}$. There are only finitely many. 
We write 
$$Q= M_Q A_Q N_Q$$ 
for its standard factorization and observe: 
\begin{itemize}
\item $M_Q\supset M$, 
\item $A_Q\subset A$, 
\item $N_Q\subset N$. 
\end{itemize}
One calls two parabolics $Q$ and $Q'$ associated if there 
exists an $n\in N_K(A)$  such that $n A_Q n^{-1}= A_{Q'}$.
This induces an equivalence relation $\sim$  on our parabolics and we 
write $[Q]$ for the corresponding equivalence class. If the 
context is clear we simply omit the brackets and just write 
$Q$ instead of $[Q]$.   

\par It follows from the Plancherel theorem (\cite{BS2}, \cite{D}) that 

$$L^2(Y)=\bigoplus_{Q\supset P_{\rm min}/\sim} L^2(Y)_{[Q]}$$
where $L^2(Y)_Q=L^2(Y)_{[Q]}$ stands for the part corresponding 
to representations which are induced off from $Q$ by discrete 
series of $M_Q/ M_Q \cap wHw^{-1}$ with $w$ running over 
representatives of $\W/ \W_H$.  

\par As $\A\subset C_0^\omega(Y)$ , see \cite{KS}, 
we observe that $\A\subset L^2(Y)$. However we note that 
$\A$ might not be dense in $L^2(Y)$: it has no components in this 
part of $L^2(Y)_Q$ which is induced from non-integrable 
discrete series of $M_Q/M_Q\cap wHw^{-1}$.

\par Let $\A_Q= L^2(Y)_Q\cap \A$. 
For the extreme choices of $Q$ we use a special terminology: 

$$L^2(Y)_{\rm disc}:= L^2(Y)_G \quad\hbox{and}\quad 
 L^2(Y)_{\rm mc}:=L^2(Y)_{P_{\rm min}}$$
and one refers to the {\it discrete} and {\it most continuous} part 
of the square-integrable spectrum. Likewise we declare 
$\A_{\rm disc}$ and $\A_{\rm mc}$.  Let us mention 
that we believe that 
$$\A_{\rm mc}\subset L^2(Y)_{\rm mc}\quad \hbox{is dense}\, $$
(the heuristic reason for that is that $M/M\cap H$ is compact).

\begin{thm}\label{Rd} $\cR(\A_{\rm disc})=\{0\}$.
\end{thm}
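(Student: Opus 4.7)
The strategy is to combine the $G$-equivariance of $\cR$ with the Plancherel orthogonality between the discrete and most continuous pieces of $L^2(Y)$. Since $\A_{\rm disc}=\A\cap L^2(Y)_G$ is $G$-stable, it suffices to verify the scalar identity $\cR(f)(eM_HN)=\int_N f(ny_o)\,dn=0$ for every $f\in\A_{\rm disc}$. Under the harmless normalization $M\subset H$ used already in the proof of Lemma~\ref{l=1}, this reduces to an integral on the open dense subset $NA\cdot y_o\subset Y$.

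To obtain the vanishing, I would pair $\cR(f)$ against a compactly supported smooth function $\phi$ on $\Xi$ and establish the adjunction
\[
\int_\Xi \cR(f)(\xi)\,\phi(\xi)\,d\xi \;=\; \int_Y f(y)\,\phi^{\vee}(y)\,dy,
\]
where $\phi^{\vee}(y)$ denotes the integral of $\phi$ over the set of horospheres through $y$. Fubini's theorem is licensed by Lemma~\ref{l=1} on the left-hand side and by the compact support of $\phi$ on the right. Using the polar parametrization (\ref{par}), one sees that $\phi^{\vee}$ has the shape of a wave packet of Eisenstein integrals attached to $P_{\rm min}=MAN$: the $A$-argument of $\phi$ plays the role of the Langlands parameter and the $K/(M_H\cap K)$-argument supplies the $K$-type data. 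By the Plancherel theorem for $Y$ (\cite{BS2},\cite{D}), such wave packets lie in the most continuous summand $L^2(Y)_{\rm mc}=L^2(Y)_{P_{\rm min}}$.

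Since $f\in L^2(Y)_{\rm disc}=L^2(Y)_G$ is orthogonal to $L^2(Y)_{\rm mc}$, the right-hand side of the adjunction vanishes for every test $\phi$, so $\cR(f)$ is zero as a distribution on $\Xi$. Combined with the continuity of $\cR(f)$ provided by Proposition~\ref{p=d}, this forces $\cR(f)\equiv 0$ pointwise.

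The main obstacle is the wave-packet identification of $\phi^{\vee}$: one has to match the $M_HN$-fibre measure on $\Xi$ against the standard normalization of the Eisenstein integrals used in the Plancherel formula, and then certify that the resulting wave packet is genuinely $L^2$. An alternative route I would keep in reserve is to reduce by density to a $K$-finite matrix coefficient $f(gy_o)=\langle \pi(g)v,\eta\rangle$ of a discrete series subrepresentation $\pi\subset L^2(Y)$ with distributional $H$-vector $\eta$, write $\int_N f(ny_o)\,dn=\langle J(v),\eta\rangle$ for the Jacquet-type distribution $J(v)=\int_N\pi(n)v\,dn$, and invoke Harish-Chandra's cuspidality of discrete series to conclude that $J(v)=0$; the analytic regularity $f\in\A$ is exactly what makes this distributional pairing licit.
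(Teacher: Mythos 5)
Both of your routes contain a genuine gap, and in each case the gap sits exactly where the real content of the theorem lives. In the main (adjunction) route, everything rests on the claim that the dual transform $\phi^{\vee}$ of a test function $\phi\in C_c^\infty(\Xi)$ lies in $L^2(Y)_{\rm mc}$. You flag this yourself as ``the main obstacle,'' but it is not a normalization issue to be cleaned up afterwards: it is the theorem in disguise. To certify it you would need (a) convergence of $\phi^{\vee}(gH)=\int_{H/M_H}\phi(gh\cdot M_HN)\,d(hM_H)$, which is an integral over the \emph{noncompact} fiber $H/M_H$ and so is not automatic from compact support of $\phi$; and (b) the identification of $\phi^{\vee}$ with a wave packet built from the $H$-invariant distribution vectors $j(\sigma,\lambda)$ together with the wave packet theorem of \cite{BS} to place it in the most continuous summand --- an investment comparable to redoing the Plancherel machinery. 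The reserve route is worse: ``Harish-Chandra's cuspidality of discrete series'' is a theorem about matrix coefficients of discrete series of the \emph{group} $G$ integrated over the unipotent radical of a parabolic of $G$; here $\pi$ is a discrete series of $G/H$, $\eta$ is an $H$-invariant distribution vector, and the assertion that $\langle\int_N\pi(n)v\,dn,\eta\rangle=0$ is precisely the statement to be proved. There is no off-the-shelf cuspidality theorem for symmetric-space discrete series to invoke, so that argument is circular.

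What the paper does instead is an asymptotic argument that needs neither orthogonality nor any cuspidality input. After the same density reduction you mention (continuity of $\cR$ from Proposition \ref{p=d}, reduction to a $K$-finite vector $f$ in a single discrete series), one sets $V=\U(\gf_\C)f$ and observes that $T:=[\cR|_V]|_A$ factors through the Jacquet module $V/\nf V$, an admissible finitely generated $(M,\af)$-module; hence $\dim\U(\af)T(f)<\infty$ and $T(f)$ is an exponential polynomial $\sum_\mu a^\mu p_\mu(\log a)$ on $A$. Since Proposition \ref{p=d} also gives $T(f)\in C_0^\omega(A)$, i.e.\ decay at infinity in every direction of $A$, the exponential polynomial must vanish identically, and $K$-finiteness then gives $\cR(f)=0$. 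If you want to salvage your approach, this Jacquet-module step is the correct replacement for the cuspidality you were hoping to quote: it uses only finiteness of the Jacquet module plus the decay of $\cR(f)$, both of which are already established in the paper.
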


\begin{proof} The proof is the same as for the group, see 
\cite{W}, Th. 7.2.2 for a useful exposition. 
\par Let $f\in \A_{\rm disc}$. We have to show that 
$\cR(f)=0$. As $\cR$ is continuous (Proposition \ref{p=d}), 
standard density arguments reduce to the case 
where $f$ belongs to a single discrete series representation and that 
$f$ is $K$-finite. Let 
$$V=\U(\gf_\C)f$$
be the corresponding Harish-Chandra module and set 
$T:=[\cR|_V]|_A$. Then $T$ factors over the Jacquet module 
$j(V) =V/\nf V$. We recall that $j(V)$ is an
 admissible finitely generated 
$(M,\af)$-module. Hence 
$$\dim \U(\af) T(f)<\infty\, .$$
Consequently
$$T(f)(a)= \sum_\mu a^\mu p_\mu(\log a)\qquad (a\in A)$$
where $\mu$ runs over a finite subset in $\af_\C^*$  and 
$p_\mu$ is a polynomial
(see \cite{W}, 8.A.2.10). From 
$T(f)\in C_0^\omega(A)$,  we thus conclude that $T(f)=0$ and hence 
$\cR(f)=0$ by the $K$-finiteness of $f$. 
\end{proof}

As a consequence of the previous theorem we obtain 
the main result of this subsection.

\begin{thm}\label{Rd-2} Let $Q\supsetneq P_{\rm min}$. 
Then $\cR(\A_Q)=\{0\}$. 
\end{thm}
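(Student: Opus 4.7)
The plan is to refine the argument of Theorem~\ref{Rd}, exploiting the fact that when $Q\supsetneq P_{\rm min}$ the split torus $A_Q\subset A$ is nontrivial. By the $G$-equivariance of $\cR$ and the polar parametrization \eqref{par}, it suffices to show $\cR(f)|_A=0$ for $f\in\A_Q$. Using the continuity of $\cR$ (Proposition~\ref{p=d}) and the fact that the $A_Q$-spectrum on $L^2(Y)_Q$ is concentrated on $i\af_Q^*$ (this is the defining feature of the $[Q]$-component of the Plancherel decomposition), I localize: approximating in the Fr\'echet topology of $\A$, I reduce to $K$-finite wave packets of matrix coefficients $f_\nu$ of standard induced modules $V_\nu:=\mathrm{Ind}_Q^G(\pi\otimes e^\nu)$, with $\pi$ running over integrable discrete series of $M_Q/(M_Q\cap wHw^{-1})$ and $\nu$ over a compact subset of $i\af_Q^*$.

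For each $f_\nu$ the Harish-Chandra module $\U(\gf_\C)f_\nu$ is admissible, so the argument of Theorem~\ref{Rd} applies verbatim: $T_\nu:=\cR(f_\nu)|_A$ factors through the Jacquet module modulo $\nf$, which is a finitely generated admissible $(M,\af)$-module. Hence
\[T_\nu(a)=\sum_\mu a^\mu p_\mu(\log a)\qquad (a\in A)\]
is a finite exponential polynomial. The essential new ingredient is the structure of the exponents: by iterated Jacquet restriction, first along $\nf_Q$ and then along $\nf^Q:=\nf\cap\m_Q$, each $\mu$ splits as $\mu|_{\af_Q}=\nu-\rho_Q$ (with imaginary $\nu$) plus $\mu|_{\af\cap\m_Q}$, which is a leading exponent of the discrete series $\pi$ and therefore has strictly negative real part on the positive chamber of $\af\cap\m_Q$.

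Reassembling the wave packet and fixing any $a^Q\in\exp(\af\cap\m_Q)$, the function
\[a_Q\longmapsto a_Q^{\rho_Q}\,T(f)(a_Q a^Q)\]
is a Bohr almost-periodic function on $A_Q$, being a superposition of the unitary characters $a_Q\mapsto a_Q^\nu$ with polynomial coefficients in $\log a_Q$. On the other hand, since $T(f)\in C_0^\omega(A)$, it vanishes as $a_Q\to\infty$ in $A_Q$ with $a^Q$ fixed. Since a nonzero almost-periodic function on a locally compact abelian group cannot vanish at infinity, this forces it to vanish identically. Varying $a^Q$ gives $T(f)\equiv 0$ on $A$, and the $K$-finiteness of $f$ together with \eqref{par} then yields $\cR(f)=0$ on all of $\Xi$.

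I expect the main obstacle to be the spectral reduction in the second paragraph: strict $K$-finite and $Z(\gf)$-finite vectors are null in the direct-integral realization of $L^2(Y)_Q$ as soon as $A_Q\neq\{1\}$, so the wave-packet decomposition along $\nu\in i\af_Q^*$ must be handled in the topology of $\A$ rather than of $L^2(Y)$, and the interchange of the Radon integral with the $\nu$-integration must be justified in a way that preserves the $C_0^\omega$-asymptotic. The structural computation of the Jacquet exponents, by contrast, is a routine application of double induction in stages.
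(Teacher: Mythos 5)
Your overall strategy---push the exponential-polynomial argument of Theorem~\ref{Rd} through the Jacquet module and then dispose of the purely imaginary $\af_Q$-exponents by an oscillation-versus-decay argument---is not the paper's route, and it contains a step that fails. The decisive error is the almost-periodicity claim in the third paragraph. A wave packet is a \emph{continuous} superposition $\int_{i\af_Q^*} c(\nu)\,a_Q^{\nu}\,d\nu$ of unitary characters; such a function is not Bohr almost periodic (almost periodic functions are uniform limits of \emph{finite} trigonometric sums and have purely atomic spectrum), and by Riemann--Lebesgue it typically \emph{does} tend to zero at infinity without vanishing identically. So the implication ``superposition of unitary characters and $C_0$ $\Rightarrow$ zero'' is false exactly in the situation you need it. A quick sanity check: your argument as written applies verbatim to $Q=P_{\rm min}$ (where $A_Q=A$ and all shifted Jacquet exponents are again imaginary), and would then yield $\cR(\A_{\rm mc})=0$, contradicting the paper's injectivity theorem for $\cR$ on $\A_{\rm mc}\cap\S(Y)$. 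The finiteness of the exponential sum, i.e.\ $\dim \U(\af)T(f)<\infty$, is what makes Theorem~\ref{Rd} work, and it is available only for $Z(\gf)$-finite vectors, which do not exist in $L^2(Y)_Q$ once $A_Q\neq\{1\}$. In addition, the reduction you flag as the ``main obstacle'' is itself unresolved: an individual matrix coefficient $f_\nu$ at $\nu\in i\af_Q^*$ is not in $L^1(Y)$, and the integral $\int_N f_\nu(gn\cdot y_o)\,dn$ defining $T_\nu$ diverges in general (it is the constant-term integral, absolutely convergent only for $\operatorname{Re}\nu$ in a shifted cone), so $T_\nu$ is not defined and the superposition cannot even be written down.

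The paper argues quite differently, and in a way that confines the exponential-polynomial argument to where it is legitimate. It introduces the intermediate horospherical space $\Xi_Q=G/(M_Q\cap H)N_Q$ and factors the Radon transform as $\cR=\cR^Q\circ\cR_Q$, where $\cR_Q$ integrates over $N_Q$ and $\cR^Q$ over $N^Q=N\cap M_Q$. For $f\in\A_Q$ a wave packet built from a discrete series $\sigma$ of $M_Q/M_Q\cap H$, the $G$-equivariance of $\cR_Q$ together with the disintegration $L^2(\Xi_Q)\simeq\Ind_{M_QN_Q}^G L^2(M_Q/M_Q\cap H)$ forces $F:=\cR_Q(f)$ to remain in the $\sigma$-isotypic component; restricting $F$ to $M_Q/M_Q\cap H$ one is then in the \emph{discrete} spectrum of the smaller symmetric space, and Theorem~\ref{Rd} applied there (where vectors really are $Z$-finite and the exponential sum really is finite) gives $\cR^Q(F)|_{M_Q/M_HN^Q}=0$; equivariance propagates this to all of $\Xi$. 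If you want to salvage your approach you would have to replace the almost-periodicity step by this kind of spectral disjointness, which in effect is what the factorization through $\Xi_Q$ accomplishes.
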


\begin{proof} If $Q=G$, then this part of the 
previous theorem. The general case will 
be reduced to that. So suppose that $P_{\rm min}\subsetneq
Q \subsetneq G$. Define $\Xi_Q= G/ (M_Q\cap H) N_Q$ and like in 
(\ref{par}) one has a diffeomorphic parameterization 
$[K\times_{M_Q\cap K} M_Q/ M_Q\cap H]\times A \to \Xi_Q$. 
\par As in Subsection \ref{sd}, one obtains 
that the map
$$\cR_Q:  \A \to L^1(\Xi_Q)^\omega, \ \ f\mapsto
\left(g(M_Q\cap H)N_Q\mapsto \int_{N_Q} f(gnH) \ dn\right)$$
is defined, $G$-equivariant and continuous. 
\par Next observe that 
$$N=N_Q \rtimes N^Q$$
with $\{{\bf 1}\} \neq N^Q \subset M_Q$. As before the  map
$$\cR^Q: L^1(\Xi_Q)^\omega\to L^1(\Xi)^\omega$$
$$f\mapsto 
\left(gM_HN \mapsto \int_{N^Q} f(gn (M_Q\cap H)N_Q) \ dn\right)$$
is defined, equivariant and continuous.  
\par Now we note that 

\begin{equation} \label{rf} \cR =\cR^Q \circ  \cR_Q\, .
\end{equation} 
\par Let now $f\in \A_Q$. Without loss of generality we may 
assume that $f$ belongs to a wave packet induced from 
a discrete series $\sigma \subset L^2(M_Q/ M_Q\cap H)$. 

Note that $M_Q/M_Q\cap H$ naturally embeds into 
$\Xi_Q$ and that the restriction of $L^1(\Xi_Q)^\omega$ to 
$M_Q/M_Q\cap H$ stays integrable. 
Hence $F:=\cR_Q(f)$ restricted to $M_Q/M_Q\cap H$
is integrable as well. 

\par We claim that $F|_{M_Q/M_Q\cap H}$
belongs to the $\sigma$-isotypical class. First note that 
$L^1(\Xi_Q)^\omega\subset L^2(\Xi_Q)$. By 
induction on stages 
$$L^2(\Xi_Q)=\Ind_{(M_Q\cap H)N_Q}^G {\rm triv}
\simeq \Ind_{M_Q N_Q}^G L^2(M_Q/M_Q\cap H)\, .$$
Thus if $L^2(M_Q/ M_Q\cap H)=\int_{\widehat M_Q}^\oplus m_\pi \H_\pi \ d\mu(\pi)$
is the Plancherel decomposition, then as $G$-modules:
$$L^2(\Xi_Q)\simeq 
\int_{\hat M_Q}^\oplus m_\pi \Ind_{M_Q N_Q}^G \H_\pi\ d\mu(\pi)\, .$$ 
Now note that $A_Q$ acts one right on $\Xi_Q$ and this 
action commutes with $G$ (see the next section for a detailed 
discussion for $Q=P_{\rm min}$). This gives us a further 
disintegration of the left regular representation $L_Q$ of $G$ 
on  $L^2(\Xi_Q)$: 

$$L_Q\simeq \int_{\widehat M_Q}^\oplus m_\pi \int_{i\af_Q^*}^\oplus
\Ind_{M_Q A_Q N_Q}^G [\pi\otimes (-\lambda- \rho_Q)\otimes{\bf 1}] \ d\lambda
\ d\mu(\pi)\, .$$ 
As $R_Q$ is $G$-equivariant, we thus conclude that 
$R_Q(f)\in \Ind_{M_Q N_Q}^G \sigma$. 

\par Our claim combined with the previous theorem implies that 
$$\cR^Q(F)|_{M_Q/M_H N^Q}=0\, .$$ 
By the equivariance properties of $\cR_Q$ and $\cR^Q$
we are free to replace 
$f$ (and hence $F$) by any $G$-translate. Consequently 
$\cR^Q(F)=0$, as was to be shown.
\end{proof}

\section{Restriction of the Radon transform to the most continuous 
spectrum}

The objective of this section is to show that $\cR$ is faithful 
on the most continuous spectrum. 

\par We recall a few 
facts on the spectrum of $L^2(\Xi)$ and the 
most continuous spectrum on $Y$ and start 
with the "horocyclic picture". The homogeneous 
space $\Xi$ carries a $G$-invariant measure. Consequently 
left shifts by $G$ in the argument of a function on $\Xi$ 
yields a unitary representation, say $L$, of $G$ on $L^2(\Xi)$; in symbols

$$(L(g)f)(\xi)= f(g^{-1}\cdot \xi)\qquad (f\in L^2(\Xi), 
g\in G, \xi\in \Xi)\, .$$ 
It is important to note that the $G$-action on $\Xi$ admits 
a commutating action of $A$ from the right
$$\xi\cdot a= ga MN \qquad (\xi=gM_HN\in\Xi, a\in A); $$
this is because $A$ normalizes $M_H N$. 
Therefore the description 

$$(R(a)f)(\xi)=  a^\rho \cdot f(\xi\cdot a) 
\qquad (f\in L^2(\Xi), a\in A, \xi\in \Xi)$$
defines a unitary representation $(R, L^2(\Xi))$  of $A$ 
which commutes with the $G$-representation $L$. 
Accordingly we define an $A$-Fourier transform 
for an appropriate function $f$ on $\Xi$ by 
$$\F_A(f)(\lambda, gM_HN)
:=\int_A  [R(a)f](gM_H N) a^{\lambda} \ da \qquad (\lambda\in i\af^*)\, .$$

\par For $\lambda\in\af_\C^*$ let us set 

\begin{align*}L^2(\Xi)_\lambda:=\{ f: G\to \C\mid & \bullet  
\  f \ \hbox{measurable}, \\ 
& \bullet \ f(\cdot man)= a^{-\rho -\lambda} f(\cdot)\ \forall man\in 
M_H A N,\\ 
&\bullet\ \int_K |f(k)|^2 \ dk <\infty\}\end{align*}

Likewise we write $C^\infty(\Xi)_\lambda$ for the 
smooth elements of $L^2(\Xi)_\lambda$. 
The disintegration of $L^2(\Xi)$ is then given by 

$$L^2(\Xi)\simeq \int_{i\af^*}^\oplus L^2(\Xi)_\lambda\ d\lambda $$
with isomorphism given by the $A$-Fourier transform 
$$f\mapsto (\lambda\mapsto \F_A (f)(\lambda, \cdot))\, .$$

\par In the next step we recall the Plancherel decomposition 
for the most continuous spectrum (cf. \cite{BS}). 

\par Some generalities upfront. For a representation $\pi$ of a 
group $L$ on some topological vector space $V$ we denote by 
$\pi^*$ the dual representation on the (strong) 
topological dual $V^*$ of $V$. 
  
\par Let $\sigma\in \widehat{M/M_H}$ and $V_\sigma$ a unitary
representation module for $\sigma$. 

For $\lambda\in \af_\C^*$ we define

 \begin{align*}\H_{\sigma,\lambda}:=\{ f: G\to V_\sigma\mid & \bullet  
\  f \ \hbox{measurable}, \\ 
& \bullet \ f(\cdot man)= a^{-\rho -\lambda} \sigma(m)^{-1}f(\cdot)\ \forall man\in P_{\rm min},\\ 
&\bullet\ \int_K \langle f(k), f(k)\rangle_\sigma  \ dk <\infty\}\, .\end{align*}   
The group $G$ acts on $\H_{\sigma,\lambda}$ by displacements from the left and the so-obtained Hilbert representation will be denoted 
by $\pi_{\sigma, \lambda}$.

\begin{rem}\label{rem=i} The relationship between $\H_{\sigma,\lambda}$ and 
$L^2(\Xi)_\lambda$ is as follows. If $\mu_\sigma$ is an (up to scalar
unique)  $M_H$-fixed 
element in $V_\sigma^*$, then the mapping 
$$\H_{\sigma,\lambda} \to L^2(\Xi)_\lambda, \ \ f\mapsto \mu_\sigma(f)$$
is a $G$-equivariant injection. The map can be made isometric by 
appropriate scaling of $\mu_\sigma$. Employing induction 
in stages one therefore obtains 
an isometric identification 
$$\widehat \bigoplus_{\sigma\in \widehat{M/ M_H}} \H_{\sigma, \lambda}
=L^2(\Xi)_\lambda\, .$$   
\end{rem}

\par Sometimes it is useful to realize $\H_{\sigma,\lambda}$ as 
$V_\sigma$-valued functions on $\oline N:=\theta(N)$; we speak of the non-compact 
realization then. Define a weight function on $\oline N$ by 

$$w_\lambda(\oline n)= a^{2 \operatorname{Re} \lambda}$$
where $a\in A$ is determined by $\oline n\in Ka N$. Then the map 

$$\H_{\sigma,\lambda} \to 
L^2 (\oline N, w_\lambda(\oline n) d\oline n)\otimes V_\sigma, \ \ f\mapsto f|_{\oline N}$$
is an isometric isomorphism.

We remark that: 
\begin{itemize}
\item $\pi_{\sigma, \lambda}$ is irreducible for generic $\lambda$. 
\item $\pi_{\sigma, \lambda}$ is unitary for $\lambda\in i\af^*$.
\item The dual representation of $\pi_{\sigma,\lambda}$ 
is canonically isomorphic to $\pi_{\sigma^*, -\lambda}$; the dual 
pairing is given by 
$$\langle f,g\rangle :=\int_{\oline N} (f(\oline n), g(\oline n))_\sigma \ d\oline n$$ 
for $f\in \H_{\sigma,\lambda}$, $g\in \H_{\sigma^*, -\lambda}$ and 
$(,)_\sigma$  the natural pairing between $V_\sigma$ and 
$V_\sigma^*$. 
\end{itemize}

\par Next we recall the description of the $H$-fixed elements in the 
distribution module $(\H_{\sigma,\lambda}^\infty)^*$. 
We first set for each $w\in \W_H\backslash \W$ 

$$V^*(\sigma, w):= (V_\sigma^*)^{w^{-1}M_Hw}\, .$$
Note that this space is one-dimensional. 
Set
$$V^*(\sigma):=\bigoplus_{w\in \W_H\backslash \W} V^*(\sigma,w) \simeq \C^{|\W_H\backslash\W|}$$
and for $w\in \W_H\backslash \W$ we denote by 
$$V^*(\sigma) \to V^*(\sigma,w), \ \ \eta\mapsto \eta_w\, .$$
the orthogonal projection. 
In the sequel we will use the 
terminology $\Re \lambda >> 0$ if 
$$(\Re \lambda -\rho)(\alpha^\vee)>0 \quad \forall \alpha\in\Sigma^+\, .$$ 
Then, for $\Re \lambda >>0$  the description

$$j(\sigma^*,-\lambda)(\eta)(g)=\begin{cases} 
a^{-\rho +\lambda} \sigma^*(m^{-1})\eta_w & \hbox{if} \ g=hwman \in HwMAN\,,   \\
0& \hbox{otherwise} \, .\end{cases}$$
defines a continuous $H$-fixed element in $\H_{\sigma^*, -\lambda}$. 
We may meromorphically continue $j(\sigma^*, \cdot)$ 
in the $\lambda$-variable and obtain, for generic values 
of $\lambda$ the identity 
$$j(\sigma^*,-\lambda)(V^*(\sigma)) =((\H_{\sigma,\lambda}^\infty)^*)^H\, .$$
For large $\lambda$ the inverse map to $j$ is given by 
$$((\H_{\sigma,\lambda}^\infty)^*)^H \ni \nu \mapsto (\nu (w))_{w\in \W_H\backslash\W} \in V^*(\sigma)\, .$$

For a smooth vector $v\in \H_{\sigma,\lambda}$ and $\eta\in V(\sigma^*)$
we obtain a smooth function on $Y=G/H$ by setting 
 
$$F_{v,\eta}(gH)= \langle \pi_{\sigma, \lambda}(g^{-1})v, 
j(\sigma^*, -\lambda)(\eta)\rangle\, .$$
The Plancherel theorem for $L^2(Y)_{\rm mc}$, see for instance \cite{BS}, 
then asserts 
the existence of a meromorphic assignment 

$$\af_\C^* \to \Gl(V(\sigma^*)), \ \ \lambda \mapsto 
C(\sigma, \lambda)$$ 
such that with $j^0(\sigma,\lambda):= j(\sigma, \lambda)\circ C(\sigma,\lambda)$
the map 

$$\Phi: \widehat{\bigoplus}_{\sigma\in\widehat{M/M_H}}  \int_{i \af_+^*}^\oplus 
\H_{\sigma,\lambda}\otimes V^*(\sigma)\ d\lambda
\to L^2(Y)_{\rm mc} $$
which for smooth vectors on the left is defined by 
$$\sum_{\sigma} (v_{\sigma,\lambda}\otimes\eta)_{\lambda}
\mapsto \left (gH\mapsto \sum_{\sigma} \int_{i\af_+^*} 
F_{v_{\sigma,\lambda}, j^0(\sigma^*, -\lambda)(\eta)}(gH) \ d\lambda\right)$$
extends to a unitary $G$-equivalence. Here $\af_+^*$ denotes a Weyl chamber 
in $\af^*$. 
 
\begin{rem} Suppose that $\W=\W_H$ (this happens in the group case). 
Then $V(\sigma^*)$ is one dimensional and we 
obtain with Remark \ref{rem=i} the following 
isomorphism: 

$$\widehat{\bigoplus}_{\sigma\in\widehat{M/M_H}}  \int_{i \af_+^*}^\oplus 
\H_{\sigma,\lambda}\otimes V(\sigma^*)\ d\lambda
\simeq \int_{i\af_+^*} L^2(\Xi)_\lambda \ d\lambda\, .$$
Hence we may view $\Phi$ as  defined on a subspace of $L^2(\Xi)$.  
\end{rem}

The inverse of the map $\Phi$ is the most continuous Fourier 
transform $\F$ (or $\F_{mc}$). For $f\in L^2(Y)_{\rm mc}\cap L^1(Y)$ 
the Fourier-transform is given by 

$$\F(f)(\sigma,\lambda,\eta)(g):=\int_Y f(y) j^0(\sigma,\lambda)(\eta)
(y^{-1}g)\ dy\, ,$$
where $\sigma\in \widehat{M/M_H}$, $\lambda\in i\af^*$ and 
$\eta\in (V^*(\sigma))^*\simeq V(\sigma^*)$. 
As a last piece of information we need to relate the Fourier-transform 
and the Radon-transform.

\subsection{The relation between Fourier and Radon transform}

Now we can determine the relation between $\cR$ and $\F$. Let us  write $\F_A^w$
for $\F_A$ on $\Xi_w= G/M_H N^w$. Let 
$f\in C_c^\infty (Y)$. 

We unwind definitions:

\begin{align*}  &\F(f)(\sigma,\lambda,\eta)(g)=\int_Y f(gy) 
j^0(\sigma,\lambda)(\eta)(y^{-1})\ dy\\
&=\sum_{w\in \W/ \W_H} \int_{ANM/w M_H w^{-1}}  f(ganmw\cdot y_o) \\
&\quad \cdot j^0(\sigma,\lambda)(\eta)(w^{-1}  m^{-1} a^{-1} n^{-1})\ da\ dn \ dm \\
&=\sum_{w\in \W/ \W_H} \int_{AM/wM_Hw^{-1}}  \cR_w (f)(gma w M_H N^w)\\ 
&\quad \cdot a^{\rho +\lambda} j^0(\sigma,\lambda)(w^{-1}  m^{-1})\ da\ \ dm \\
&=\sum_{w\in \W/ \W_H} \int_{M/w M_Hw^{-1}}  [\F_A^w \circ \cR_w] (f)(w^{-1} \lambda, g m w M_H N^w) \\
&\quad\cdot j^0(\sigma,\lambda)(w^{-1}  m^{-1}) \ dm \\
&=\sum_{w\in \W/ \W_H} \int_{M/wM_Hw^{-1}}  [\F_A^w \circ \cR_w] (f)(w^{-1} \lambda, gm w M_H N^w)\\ 
&\quad \cdot \sigma(m) j^0(\sigma,\lambda)(\eta)(w^{-1})\ \ dm
\end{align*}
Let us remark that $j^0(\sigma,\lambda)(\eta)$ is a distribution  and a 
priori the  
evaluation  $ j^0(\sigma,\lambda)(\eta)(w^{-1})$ has only meaning for 
$\Re \lambda$ 
sufficiently small. This problem is overcome by the meromorphic 
continuation of $j(\sigma,\lambda)$. This meromorphic continuation 
is in fact obtainable by an iterative procedure starting with 
$\Re \lambda$ small and larger values obtained 
by a differential operator with polynomial coefficients \cite{B}. 
This fact allows us to replace $C_c^\infty(Y)$ with the Schwartz
space $\S(Y)$ of rapidly decreasing  functions (see \cite{BS1} Sect. 12, 
and not be confused with the Harish-Chandra Schwartz space $\cC(Y)$ in Section 5 below).  
Thus we have shown:

\begin{lem} Let $f\in \S(Y)$. Then for all 
$\sigma\in \widehat{M/M_H}$, $\lambda\in i\af_+^*$
\begin{align*} & \F(f)(\sigma,\lambda,\eta)(g)= \sum_{w\in \W/ \W_H} 
\int_{M/ wM_H w^{-1}} [\F_A^w \circ \cR_w] (f)(w^{-1} \lambda, g m w M_H N^w)\\
&\quad \cdot  \sigma(m)j^0(\sigma,\lambda)(\eta)(w^{-1})\ dm \, .\end{align*}
\end{lem}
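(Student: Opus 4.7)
The plan is to carry through the unwinding displayed immediately before the lemma statement, and to justify (a) the pointwise evaluation of the distribution $j^0(\sigma,\lambda)(\eta)$ at $w^{-1}$ and (b) the extension from $C_c^\infty(Y)$ to $\S(Y)$. The algebraic core is the five-line computation already shown: I substitute the definition of $\F(f)$, split the integral over $Y=G/H$ using the open $H\times P_{\rm min}$-orbit decomposition (\ref{oo}) into finitely many cells indexed by $w\in\W/\W_H$, parametrize each cell $HwP_{\rm min}/H$ by $(a,n,m)\in A\times N\times M/wM_Hw^{-1}$ via $y=anmw\cdot y_o$, and pull the $P_{\rm min}$-equivariance $j^0(\sigma,\lambda)(\eta)(\,\cdot\,man)=a^{\rho-\lambda}\sigma^*(m^{-1})j^0(\sigma,\lambda)(\eta)(\,\cdot\,)$ outside. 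The $n$-integral is then, by definition of the (generalized) Radon transform from Remark~\ref{ran}, equal to $\cR_w(f)(gamwM_HN^w)$; the subsequent $a$-integral against $a^{\rho+\lambda}$ is $[\F_A^w\circ\cR_w](f)(w^{-1}\lambda,\cdot)$, after absorbing the Weyl twist arising from conjugating the $A$-action by $w$. The remaining $m$-integration delivers the stated formula.

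The main obstacle, as flagged in the excerpt, is that $j^0(\sigma,\lambda)(\eta)$ is a genuine locally integrable function (so that $j^0(\sigma,\lambda)(\eta)(w^{-1})$ is unambiguously defined) only for $\Re\lambda$ sufficiently negative; for $\lambda\in i\af_+^*$ it is merely defined via meromorphic continuation. To handle this I would first run the above unwinding at the level of $f\in C_c^\infty(Y)$ and $\Re\lambda\ll 0$, where every integral converges absolutely, every use of Fubini is routine, and every pointwise evaluation is legitimate. Then I invoke Bernstein's construction \cite{B}: the meromorphic continuation of $\lambda\mapsto j(\sigma,\lambda)$ is realized by the action of a differential operator in $\lambda$ with polynomial coefficients. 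Applying this operator to both sides propagates the identity to generic $\lambda$, and in particular to $i\af_+^*$, since both sides are meromorphic in $\lambda$ and agree on a set with nonempty interior.

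Finally, to pass from $C_c^\infty(Y)$ to $\S(Y)$ I would combine two continuity facts: the continuity of the most continuous Fourier transform $\F$ on $\S(Y)\cap L^2(Y)_{\rm mc}$ established in \cite{BS1}, and the continuity of each $\F_A^w\circ\cR_w$ on $\S(Y)$, which reduces via Lemma~\ref{l=1} and Proposition~\ref{p=d} to the Schwartz-type decay of $f$ along $A$ that is built into the definition of $\S(Y)$. Since $C_c^\infty(Y)$ is dense in $\S(Y)$ in its Schwartz topology and both sides of the identity are continuous in $f$ for fixed $(\sigma,\lambda,\eta,g)$, the identity passes to the limit. The heart of the argument is thus the meromorphic continuation step; the cell-by-cell unwinding and the density/continuity argument are routine once that is in place.
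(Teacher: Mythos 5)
Your proposal follows essentially the same route as the paper: the cell-by-cell unwinding over $\W/\W_H$ displayed just before the lemma, followed by Bernstein's meromorphic continuation of $j(\sigma,\lambda)$ to justify the evaluation $j^0(\sigma,\lambda)(\eta)(w^{-1})$ at $\lambda\in i\af_+^*$, and the passage from $C_c^\infty(Y)$ to $\S(Y)$ via the continuity results of \cite{BS1}. The additional detail you supply on the density and continuity step is a reasonable elaboration of what the paper only sketches, but it is not a different argument.
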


\begin{rem} The special case of $\W=\W_H$ is of particular 
interest. Then the formula from above simplifies to 

\begin{align*} & \F(f)(\sigma,\lambda,\eta)(g)= 
\int_{M/M_H} [\F_A\circ \cR] (f)(\lambda, gmM_H N)\\
&\quad \cdot  \sigma(m)j^0(\sigma,\lambda)(\eta)({\bf 1})\ dm\, .\end{align*}
\end{rem}

\begin{thm} $\cR$ restricted to $\A_{\rm mc}\cap \S(Y)$ is injective. 
\end{thm}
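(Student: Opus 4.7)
The plan is to use the Fourier--Radon identity just proved to convert the hypothesis $\cR(f)=0$ into vanishing of the most continuous Fourier transform $\F(f)$, and then invoke Plancherel for $L^2(Y)_{\rm mc}$ to conclude.

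I would fix $f\in \A_{\rm mc}\cap \S(Y)$ with $\cR(f)=0$ and feed it into the Fourier--Radon formula of the preceding lemma. The key choice is of $\eta\in V^*(\sigma)$: since $j^0(\sigma,\lambda)=j(\sigma,\lambda)\circ C(\sigma,\lambda)$ and since $j(\sigma,\lambda)(\cdot)(w^{-1})$ isolates the $V^*(\sigma,[w^{-1}])$-component of its argument, the right-hand sum collapses to a single integral over $M/M_H$ precisely when $C(\sigma,\lambda)\eta\in V^*(\sigma,e)$. For such $\eta$ the surviving integrand is proportional to $[\F_A\circ \cR](f)$, which is zero by hypothesis. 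Hence
\[
\F(f)(\sigma,\lambda,\eta)(g)=0
\]
for every $g\in G$, every $\sigma\in\widehat{M/M_H}$, every $\lambda\in i\af_+^*$ outside the singular locus of $C(\sigma,\lambda)$, and every $\eta\in C(\sigma,\lambda)^{-1} V^*(\sigma,e)$.

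Next I would upgrade this slice of vanishing---a single line inside $V^*(\sigma)$ for each $(\sigma,\lambda)$---to vanishing of $\F(f)$ on the whole multiplicity space $V^*(\sigma)$. The tool is the $\W$-equivariance of the Plancherel scheme: for each $w\in \W$ the normalised intertwining operator $A(w,\sigma,\lambda)\colon \pi_{\sigma,\lambda}\to \pi_{w\sigma,w\lambda}$ identifies the $V^*(w\sigma,e)$-line at parameter $(w\sigma,w\lambda)$ with a distinguished line inside $V^*(\sigma)$ at $(\sigma,\lambda)$. Applying the first step at every Weyl translate $(w\sigma,w\lambda)$ and transporting the vanishing back, one obtains a family of lines in $V^*(\sigma)$---indexed by $w\in \W$---on which $\F(f)$ vanishes, and this family spans $V^*(\sigma)$ for generic $\lambda$. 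Consequently $\F(f)=0$ as a vector-valued section of the Plancherel bundle, and the unitarity of $\Phi$ on $L^2(Y)_{\rm mc}$ together with $f\in \A_{\rm mc}\subset L^2(Y)_{\rm mc}$ forces $f=0$.

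The main obstacle is this second step. One must control both the $c$-function $C(\sigma,\lambda)$---which varies meromorphically in $\lambda$ and may drop rank on a hypersurface---and the normalised intertwiners, and then verify that the Weyl-translated spanning family really does fill $V^*(\sigma)$. The Schwartz hypothesis on $f$ enters crucially here: it is what guarantees enough regularity of $\F(f)$ in $\lambda$ (via the meromorphic continuation of $j(\sigma,\lambda)$ recalled just before the Fourier--Radon lemma) for the partial vanishing to be propagated across the singular locus of $C$ and across Weyl-translated parameters. In the special case $\W=\W_H$ singled out in the remark, the whole argument trivialises: $V^*(\sigma)=V^*(\sigma,e)$ is already one-dimensional, the Fourier--Radon sum has a single term, and $\cR(f)=0$ directly gives $\F(f)=0$.
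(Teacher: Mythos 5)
Your first step is coherent: choosing $\eta$ with $C(\sigma,\lambda)\eta\in V^*(\sigma,e)$ does collapse the Fourier--Radon sum to the $w=e$ term, and that term vanishes because it only involves $\cR(f)=\cR_e(f)$. The proof founders on the second step, and you have correctly identified where: you obtain vanishing of $\F(f)(\sigma,\lambda,\cdot)$ only on a single line in the multiplicity space $V^*(\sigma)\simeq\C^{|\W_H\backslash\W|}$, and the claim that transporting the analogous lines from the Weyl-translated parameters $(w\sigma,w\lambda)$ via normalised intertwining operators produces a spanning family of $V^*(\sigma)$ is asserted, not proved. That spanning statement is essentially equivalent to the theorem itself --- it is exactly the question of whether integration over the single family of horospheres parametrised by $\Xi$ determines $f$ --- so as written the argument is circular at its crucial point. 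A further warning sign is that your argument nowhere uses the analyticity of $f$, only its Schwartz decay; if it worked it would prove injectivity of $\cR|_\Xi$ on all of $\S(Y)\cap L^2(Y)_{\rm mc}$, a substantially stronger statement than the one claimed, and the hypothesis $f\in\A$ would be superfluous.

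The paper's proof avoids the multiplicity-space analysis entirely by feeding \emph{all} the transforms $\cR_w(f)$, not just $\cR_e(f)$, into the Fourier--Radon formula. By Remark \ref{ran}, for $f\in\A$ the Radon transform patches to an analytic function on the whole horosphere manifold $\Hor(Y)$, of which $E(\Xi)$ is an open chart; since $\Hor(Y)$ is connected (this is what the connectedness lemma in Section 2 was proved for), $\cR(f)=0$ on $E(\Xi)$ propagates by analytic continuation to all of $\Hor(Y)$, i.e.\ $\cR_w(f)=0$ for every $w\in\W/\W_H$. Then every summand of the Fourier--Radon formula dies for every $\eta$, no special choice of $\eta$ is needed, and the injectivity of $\F$ on $\S(Y)$ (\cite{BS1}, Cor.~12.7) gives $f=0$. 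This analytic-continuation step, which is where the hypothesis $f\in\A$ enters, is the missing ingredient in your proposal; with it in hand your entire second step becomes unnecessary.
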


\begin{proof} Let $f\in \A_{\rm mc}\cap \S(Y)$. Suppose that $\cR(f)=0$. 
With Remark \ref{ran} we conclude that $\cR_w(f)=0$ for all 
$w$. Hence the lemma from above implies that $\F(f)=0$. As the 
Fourier transform is injective on $\S(Y)$, see \cite{BS1}  Cor. 12.7, 
we get that $f=0$. 
\end{proof}

\begin{rem} It is very likely that $\S(Y)\cap \A_{\rm mc}$ is 
dense in $\A_{\rm mc}$, but there does not exist a reference at 
the moment. If this were established, then the 
theorem above would imply that $\cR$ restricted to $\A_{\rm mc}$ 
is injective. 
\end{rem}

\subsection{Concluding remarks}
\subsubsection{The group case}
It is instructive to see what the results in this paper mean
for a semisimple group $G$ viewed as a symmetric space, i.e.: 
$$G\simeq G\times G/ \Delta(G)$$
with $\Delta(G)=\{ (g,g)\mid g\in G\}$ the diagonal group. 
If $P=MAN$ is a minimal parabolic of $G$ and $\oline P= MA\oline N$ 
its standard opposite (i.e. the image under the  corresponding 
Cartan involution), 
then the parameter space for the horospheres is given by 
$$\Xi= G\times G / \Delta(MA)(N\times \oline N)\, .$$

\par Our function space $\A$ are then the analytic vectors for the 
left-right regular representation of $G\times G$ on $L^1(G)$. 
For $f\in\A$ one then has 

$$ \cR(f)((g,h)\Delta(MA)(N\times \oline N))=
\int_{N\times \oline N} f(gn\oline n h^{-1})\ dn
\ d\oline n\, .$$

\subsubsection{The next steps}
Coming back to our more general situation of 
$Y=G/H$ let us consider the double fibration 
\begin{equation}\label{eq=df}
 \xymatrix { & G/ M_H \ar[dl] \ar[dr] &\\
\Xi & & Y\,.}\end{equation}
With $\cR$ comes  a dual transform $\cR^\vee$  
between appropriate function spaces $\F(\Xi)$ and $\F(Y)$ 
on $\Xi$ and $Y$: 
$$\F(\Xi)\to \F(Y);\  \cR^\vee(\phi)(gH)=\int_{H/M_H} \phi(gh \cdot M_H N)\ d(hM_H)\, .$$
For $f\in \A_{\rm mc}$ one then might ask about the existence of 
a pseudo-differential operator $D$ such that 
$$f = \cR^\vee (D\cR(f))$$
holds. For $Y=\Sl(2,\R)/\SO(1,1)$ this was considered in  \cite{GKO} 
where it was shown that such a pseudo-differential operator $D$ exists. For 
$Y$ being a group one might expect that $D$ is in fact a differential operator. 
  
\subsubsection{Radon transform on Schwartz spaces}

One might ask to what extend $\cR$ might be defined on the 
Schwartz space of $Y$. For some classes of $Y$ this seems to be possible
and we will comment on this in more detail below. Let us  
first recall the definition of the Schwartz space. 

\par One uses 
\begin{equation} \label{polar} G=KAH \end{equation}
often referred  to as the polar decomposition 
of $G$ (with respect to $H$ and $K$). Accordingly 
every $g\in G$ can be written as $g=k_g a_g h_g$ with 
$k_g\in K$ etc. It is important to notice that
$a_g$ is unique modulo $\W_H$. Therefore the prescription 

$$ \|gH\|:= |\log a_g| \qquad (g\in G)$$
is well defined for $|\cdot|$ the Killing norm on 
$\pf$. 
An alternative, and often useful,  description of $\|\cdot \|$ is as follows 

\begin{equation} \label{n2} 
\|y\|= {1\over 4} 
\left|\log \left[y\tau(y)^{-1} \theta (y \tau(y)^{-1})^{-1}\right]\right |\qquad 
(y\in Y)\, .\end{equation}
 
For $u\in \U(\gf)$ we write $L_u$ for the corresponding 
differential operator on $Y$, i.e. for $u\in\gf$ 

$$(L_u f)(y)= {d\over dt}\big|_{t=0} f(\exp(-tu)y)\, , $$
whenever $f$ is a differentiable function at $y$. 
With these preliminaries one defines the Harish-Chandra
Schwartz space of $Y$ by 
\begin{align*} \cC(Y)=\{ f\in C^\infty(Y) &\mid \forall u\in \U(\gf) 
\ \forall n\in \N\\  
& \sup_{y\in Y}  \Theta(y) (1+\|y\|)^n  |(L_uf)(y)|<\infty\}\, 
\end{align*}
where $\Theta(gH)=\phi_0 (g\tau(g)^{-1})^{-1/2}$ and $\phi_0$ 
Harish-Chandra's basic spherical function.

It is not to hard to see that $\cC(Y)$ with the obvious family 
of defining seminorms is a Fr\'echet space. Moreover 
$\cC(Y)$ is $G$-invariant and $G$ acts smoothly on it. 
We note that $\cC(Y)\subset L^2(Y)$ is a dense 
subspace. 

\par We write $BC^\infty(\Xi)$ for the space of bounded 
smooth functions on $\Xi$. 

\par In the context of defining $\cR$ on $\cC(Y)$ we 
focus we wish to discuss a basic example. 

\begin{lemma} \label{thm=R} Let $Y=\Sl(2,\R)/\SO(1,1)$, and  $f\in \cC(Y)$. Then the following assertions hold: 
\begin{enumerate}
\item The integral $\int_N f(nH)\  dn $ is absolutely 
convergent. 
\item The prescription 
$$gM_H N \mapsto \int_N f(gnH)\ dn $$ 
defines a function in $BC^\infty(\Xi)$. 
\end{enumerate}
\end{lemma}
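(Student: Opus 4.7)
The plan is to exploit the rank-one and low-dimensional structure of $\Sl(2,\R)/\SO(1,1)$: here $N$ is one-dimensional ($n_x=\exp(xE)$), $A$ is one-dimensional ($a_t=\exp(tH_0)$), and the computations involved in the Schwartz estimates reduce to explicit rank-one asymptotics.

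First I would handle (i). Unpacking the definition of $\cC(Y)$, for every $N$ we have
\[
|f(y)|\le C_N\,\Theta(y)^{-1}(1+\|y\|)^{-N}
=C_N\,\phi_0\bigl(y\tau(y)^{-1}\bigr)^{1/2}(1+\|y\|)^{-N}.
\]
I would use the Cartan embedding to compute explicitly
\[
n_x\tau(n_x)^{-1}=\begin{pmatrix}1-x^2 & x\\ -x & 1\end{pmatrix},
\]
whose $KAK$-parameter $a_{t(x)}$ satisfies $2\cosh 2t(x)=2+x^4$, so $t(x)\sim 2\log|x|$ as $|x|\to\infty$. Combining (\ref{n2}) with the standard rank-one asymptotics $\phi_0(a_t)\asymp t e^{-t}$ for $\sl(2,\R)$ then gives $\|n_xH\|\asymp\log(2+x^2)$ and $\Theta(n_xH)^{-1}\asymp|x|^{-1}\sqrt{\log(2+x^2)}$. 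Hence $|f(n_xH)|\lesssim_N|x|^{-1}(\log(2+x^2))^{-N+1/2}$, which after the substitution $u=\log|x|$ is integrable on $\R$ for any $N\ge 2$.

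Next I would prove (ii). By the polar parameterization (\ref{par}) every $\xi\in\Xi$ is of the form $kaM_HN$ with $k\in K$ and $a\in A$, and by compactness of $K$ and $K$-invariance of the relevant seminorms it suffices to show
\[
\sup_{a\in A}\int_N|f(anH)|\,dn<\infty,
\]
together with the analogous bound with $f$ replaced by $L_uf$ for each $u\in\U(\gf)$ (this takes care of smoothness by differentiation under the integral, since $\cC(Y)$ is $L_u$-stable and (i) provides the dominating integrand uniformly on compact $g$-sets). Applying the same change of variables $n\mapsto a^{-1}na$ used in the proof of Lemma~\ref{l=1}(ii), the integral equals
\[
a^{-2\rho}\int_N f(naH)\,dn,
\]
so I need $\sup_{a}a^{-2\rho}\int_N|f(naH)|\,dn<\infty$. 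Writing $a=a_t$ and $n=n_x$, I would compute $n_xa_t\tau(n_xa_t)^{-1}=n_xa_{2t}\bar n_{-xe^{-4t}}$, reduce to a $KAK$-parameter $s(x,t)$ and obtain explicit two-variable asymptotics for $\Theta(n_xa_tH)^{-1}$ and $\|n_xa_tH\|$. One then verifies that $a^{-2\rho}=e^{-2t}$ combines with $\Theta^{-1}$ and the rapid $(1+\|\cdot\|)^{-N}$-decay to yield a bound uniform in $t$, by splitting the $x$-integration into the bounded and large-$|x|$ regions and applying the same $u=\log|x|$ substitution as in (i).

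The principal technical obstacle is this last uniform estimate along $A$: the factor $\Theta(na H)^{-1}$ can grow in both the $n$- and $a$-directions, and I have to show that the damping from $a^{-2\rho}$ plus the rapid decay in $\|\cdot\|$ wins uniformly in $t$. In this rank-one case the computation is tractable because everything reduces to an explicit one-parameter Gauss decomposition, and the asymptotics of $\phi_0$ and of the Cartan norm are classical; no analog of the Harish-Chandra $\Xi$-estimate beyond $\Sl(2)$ is needed.
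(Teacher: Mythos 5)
Part (i) of your proposal is correct and is essentially the paper's own argument: the same explicit computation of $n_x\tau(n_x)^{-1}$, the same lower bounds $\|n_xH\|\asymp \log(2+x^2)$ and $\Theta(n_xH)\gtrsim |x|\,(\log(2+x^2))^{-1/2}$, and the same logarithmic substitution to get integrability for $N\ge 2$. (Your asymptotics are in fact slightly more careful than the paper's, which states $\Theta(n_x)\ge |x|$; the discrepancy is harmless.)

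For part (ii), however, the step you defer with ``one then verifies that $a^{-2\rho}=e^{-2t}$ combines with $\Theta^{-1}$ and the rapid $(1+\|\cdot\|)^{-N}$-decay to yield a bound uniform in $t$'' is precisely where the whole content of the lemma sits, and as you describe it the verification does not go through in one of the two $A$-directions. Concretely, parametrize $a_t=\diag(t,1/t)$ as in the paper and let $t\to 0^{+}$ (equivalently, in your normalization, the direction in which $a^{-2\rho}\to\infty$). For $1\le |x|\le t^{-2}$ one has $\operatorname{tr}\bigl(z_{t,x}z_{t,x}^{T}\bigr)=t^4(1-x^2)^2+2x^2+t^{-4}\asymp t^{-4}$, so \emph{every} point $a_tn_xH$ in this range lies at Cartan distance $\asymp 2\log(1/t)$ from $y_o$, and $\Theta(a_tn_xH)^{-1}\asymp t\,(1+\log(1/t))^{1/2}$ uniformly in $x$ there. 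The pointwise seminorm bounds $|f|\le C_N\,\Theta^{-1}(1+\|\cdot\|)^{-N}$ therefore majorize the contribution of this region only by $t^{-2}\cdot t\cdot (1+\log(1/t))^{1/2-N}=t^{-1}(1+\log(1/t))^{1/2-N}$, which is unbounded as $t\to 0$ for every fixed $N$; and a nonnegative $f$ saturating the seminorms (a smoothed multiple of $\Theta^{-1}$ times a rapidly decreasing function of $\|\cdot\|$) shows this is not an artifact of a lossy estimate. The structural reason is that the horocycle $a_tN\cdot y_o$ spends measure $\asymp e^{2R}$ at distance $R=2\log(1/t)$ from the base point, while the Schwartz decay at distance $R$ is only $e^{-R}$ times a super-polynomially small factor, so no choice of $N$ rescues the region-splitting plus $u=\log|x|$ argument you invoke from (i). Your computation (like the paper's) does prove boundedness for $a$ going to infinity in the chamber adapted to $N$ and on compacta of $A$; but in the opposite chamber the argument collapses, and the uniform bound over all of $A$ cannot be extracted from the pointwise Schwartz estimates alone. (For what it is worth, the paper is thinnest at exactly this point: for $|t|<1$ it records only $\|a_tn_x\|\ge \log|x|$, which together with $\Theta\ge 1$ gives the non-integrable majorant $(1+\log|x|)^{-N}$.) So this is a genuine gap: either an additional input beyond the seminorm bounds, or a restriction of the boundedness claim to the closure of the positive chamber, is needed, and your sketch supplies neither. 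A small separate point: your identity $n_xa_t\tau(n_xa_t)^{-1}=n_xa_{2t}\bar n_{-xe^{-4t}}$ should read $n_xa_{2t}\bar n_{-x}=n_x\bar n_{-xe^{-4t}}a_{2t}$, but this does not affect the discussion above.
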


\begin{proof} Let $A$ be the diagonal subgroup of $G$ (with positive entries) 
and $N=\begin{pmatrix} 1 & \R \\ 0& 1\end{pmatrix}$. 

\par (i) For $x\in\R$ and $n_x= \begin{pmatrix} 1 & x \\ 0& 1\end{pmatrix}$
we have to determine $a_x\in A$ such that 
$n_x \in K a_x H$. We use (\ref{n2}) and start: 

$$z_x:= n_x \tau(n_x)^{-1}= \begin{pmatrix} 1 & x  \\ 0& 1\end{pmatrix}
\cdot \begin{pmatrix} 1 & 0  \\ -x& 1\end{pmatrix}=
 \begin{pmatrix} 1-x^2 & -x  \\ x& 1\end{pmatrix}$$
and hence 
\begin{align*}y_x &:= z_x \theta(z_x)^{-1}=  
\begin{pmatrix} 1-x^2 & -x  \\ x& 1\end{pmatrix}\cdot 
\begin{pmatrix} 1-x^2 & x  \\ -x& 1\end{pmatrix}\\
&=\begin{pmatrix} (1-x^2)^2 + x^2  & *  \\ * & 1+x^2\end{pmatrix}\, .
\end{align*}
For $|x|$ large we have $\log |y_x|=|\log y_x|$. Furthermore up to 
an irrelevant constant 

\begin{align*} |y_x|&=[\operatorname{tr} (y_x y_x)]^{1\over 2}\geq {1\over 2}
[(1-x^2)^2 +x^2 + 1+x^2] \\
&\geq {1\over 2}[x^4 +1]\end{align*}
Therefore, for $|x|$ large  

$$\|n_x\|\geq  {1\over 4} \log (x^4/2 +1/2)$$

From Harish-Chandra's basic estimates of $\phi_0$ and our computation 
of $z_x$ we further get that 

$$\Theta(n_x)\geq |x|\, .$$
Therefore for $f\in \cC(Y)$ we obtain that 
$x\mapsto |f(n_x H)|$ 
growths slower than ${1\over |x|\cdot |\log x|^N}$ for any fixed 
$N>0$ and $|x|$ large. This shows (i). 

\par\noindent  (ii) Let $f\in \cC(Y)$ and set $F:=\cR(f)$. From the proof 
of (i) we know that $F$ is smooth. 
It remains to see that $F$ is bounded. 
{}From $G=KAH$ we deduce that it  is enough
to show that $F|_A$ is bounded. 
We do this by direct computation. 
For $t>0$ we set 
$$a_t=\begin{pmatrix} t & 0 \\ 0 & 1/t\end{pmatrix}\, .$$
Then 
$$a_t n_x = \begin{pmatrix} t & tx  \\ 0 & 1/t\end{pmatrix}$$
and thus 
\begin{align*} z_{t,x}&:=a_t n_x  \tau(a_t n_x)^{-1}= 
 \begin{pmatrix} t & tx  \\ 0 & 1/t\end{pmatrix}\cdot 
 \begin{pmatrix} t & 0  \\ -tx & 1/t\end{pmatrix}\\
&= \begin{pmatrix} t^2(1-x^2) & -x  \\ x & 1/t^2\end{pmatrix}
\, .\end{align*}
With that we get 
$$y_{t,x}= z_{t,x} \theta(z_{t,x})^{-1}= 
\begin{pmatrix} t^4(1-x^2)^2 +x^2  & *  \\ *  & 1/t^4 +x^2\end{pmatrix}\,.$$
For $t\geq 1$ we  conclude that 
$$ \| a_t n_x\| \gtrsim\log \left(\begin{cases} c _1 t^4 & \hbox{for}\ |x|
\leq 1/2,  \\ c_2 t^4 x^4  - c_3 & \hbox{for}\ |x|\geq 1/2\, . 
\end{cases}\right)$$
and for $|t|<1$ one has 
$$ \| a_t n_x\| \geq  \log |x|\, .$$ 
From that we obtain  (ii). 
\end{proof}

This example is somewhat specific. One might expect that 
the Radon transform on $\cC(Y)$ converges whenever 
the real rank of $G$ and of $Y$ coincide. 
\par For groups it is not hard to show that 
$\cR(f)$ does not converge for general $f\in \cC(G)$; 
integrability of $f$ is needed.


\begin{thebibliography}{99} 
\bibitem{BS} E. P. van den Ban and H. Schlichtkrull, {\it 
The most continuous part of the Plancherel decomposition 
for a reductive symmetric space}, Ann. Math. {\bf 145} (1997), 
267--364 

\bibitem{BS1} \bysame, {\it Fourier Inversion on a reductive symmetric space}, 
Acta Math. {\bf 182} (1999), 25--85

\bibitem{BS2} \bysame, 
{\it The Plancherel decomposition for a reductive symmetric space. II. 
Representation theory},   Invent. Math.  {\bf 161}  (2005), no. {\bf 3}, 
567--628

\bibitem{B} J. Bernstein, {\it Analytic continuation of generalized functions with respect to a parameter}, Functional Anal. Appl. {\bf 6} (1972),
 273--285 (1973)

\bibitem{D} P. Delorme, {\it Formule de Plancherel pour les espaces 
sym\'etriques r\'eductifs}, Ann. of Math. {\bf (2)  147} (1998),  
no. {\bf 2}, 417--452


\bibitem{gko} S. Gindikin, B. Kr\"otz and G. \'Olafsson, {\it Holomorphic 
$H$-spherical distribution vectors in principal series representations}, 
Invent. math. {\bf 158}, 643--682 (2004)

\bibitem{GKO} \bysame, {\it Holomorphic horospherical transform on 
non-compactly causal spaces}, Int. Math. Res. Not.  2006, Art. ID 76857, 47 p. 

\bibitem{GKO2}\bysame, {\it Horospherical model for holomorphic discrete 
series and horospherical Cauchy transform},  Compos. Math.  {\bf 142}  (2006),  
no. {\bf 4}, 983--1008.

\bibitem{K} A. Knapp, {\it Lie Groups beyond an Introduction}, 2nd edition, 
Birkh\"auser, 2002

\bibitem{KS} B. Kr\"otz and H. Schlichtkrull, {\it Function spaces on 
symmetric spaces}, arXiv:0711.1087, submitted 8 p.

\bibitem{Ma} T. Matsuki, {\it The orbits of affine symmetric spaces under the 
action of minimal parabolic subgroups}, J. Math. Soc. Japan {\bf 31}, 
no. {\bf 2}, 1979

\bibitem{S} H. Schlichtkrull, {\it Hyperfunctions and Harmonic Analysis
on Symmetric Spaces}, Progress in Math. {\bf 49}, Birkh\"auser,  
1984

\bibitem{W} N. Wallach, {\it Real Reductive Groups I}, Acad. Press, 
1988


\end{thebibliography}
\end{document}